\setlist[enumerate,1]{label = (\roman*)}
\theoremstyle{definition}
\newtheorem{definition}{Definition}[section]
\newtheorem{theorem}[definition]{Theorem}
\newtheorem{lemma}[definition]{Lemma}
\newtheorem{corollary}[definition]{Corollary}
\newtheorem{fact}[definition]{Fact}
\newtheorem*{remark}{Remark}
\newcommand{\EF}[3]{\mathrm{EF}^{#1}_{#2}\left(  #3  \right)}
\newcommand{\EFD}[3]{\mathrm{EFD}_{#1}^{#2}\left( #3 \right)}
\newcommand{\wins}{\mathbin{\raisebox{0.8\depth}{$\uparrow$}}}
\newcommand{\playerone}{\mathrm{\mathbf{I}}}
\newcommand{\playertwo}{\mathrm{\mathbf{II}}}
\newcommand{\A}{\mathfrak{A}}
\newcommand{\B}{\mathfrak{B}}
\newcommand{\N}{\mathbb{N}}
\newcommand{\R}{\mathbb{R}}
\newcommand{\abs}[1]{\left| #1 \right|}
\newcommand{\norm}[1]{\left\| #1 \right\|}
\DeclareMathOperator{\dom}{dom}
\DeclareMathOperator{\ran}{ran}
\DeclareMathOperator{\qr}{qr}
\DeclareMathOperator{\density}{density}
\DeclareMathOperator{\ar}{ar}
\DeclareMathOperator{\modulus}{mod}
\DeclareMathOperator{\bound}{bd}
\DeclareMathOperator{\id}{id}
\newcommand{\infinitary}[1]{\mathcal{L}_{#1\omega}}
\newcommand{\dotminus}{\mathbin{\text{\@dotminus}}}
\newcommand{\@dotminus}{%
\ooalign{\hidewidth\raise1ex\hbox{.}\hidewidth\cr$\m@th-$\cr}%
}
\newcommand{\CFO}{\mathrm{CFO}} 
\DeclareMathOperator{\Th}{Th}
\DeclareMathOperator{\str}{str}
\title{Ehrenfeucht--Fraïssé Games for Continuous First-Order Logic}
\author{Åsa Hirvonen}
\email{asa.hirvonen@helsinki.fi}
\author{Joni Puljujärvi}
\email{joni.puljujarvi@helsinki.fi}
\date\today
\address{Department of Mathematics and Statistics, University of Helsinki, P.O. Box 68 (Pietari Kalmin katu 5), 00014 Helsinki, Finland}
\thanks{This project has received funding from the European Research Council (ERC) under the European Union’s Horizon 2020 research and innovation programme (grant agreement No 101020762).}
\keywords{Ehrenfeucht--Fraïssé games, continuous first-order logic, quantifier rank, partial isomorphism, metric model theory}
\subjclass[2020]{03C66} 
\begin{document}

\begin{abstract}
    We define a version of the Ehrenfeucht--Fraïssé game in the setting of metric model theory and continuous first-order logic and show that the second player having a winning strategy in a game of length $n$ exactly corresponds to being elementarily equivalent up to quantifier rank $n$. We then demonstrate the usefulness of the game with some examples. Finally, we discus connections between the game of length $\omega$ and infinitary logic.
\end{abstract}

\maketitle

\section{Introduction}

\noindent One of the important classical results of finite model theory is the connection between Ehrenfeucht--Fraïssé game of length $n$ and elementary equivalence up to quantifier rank $n$. In a finite relational signature, two structures satisfy the same sentences of quantifier rank $\leq n$ if and only if the second player has a winning strategy in the EF game of length $n$ between the structures. If the signature is not finite, then one needs to restrict the plays of the game to finite pieces of the signature. If the signature is not relational, then the connection between the length of the game and quantifier rank becomes more complicated and tied to nesting of function symbols.

In this paper, we replicate the classical connection between elementary equivalence up to quantifier rank $n$ and length $n$ EF games for continuous logic. Essentially the same game has appeared in lecture notes of Hart~\cite{hartgames} but, at least to the knowledge of the authors, no systematic study of it has been conducted. Classically, a tool fine-tuned for quantifier rank allows for more (in)expressivity results than a tool that can only be used to show full elementary equivalence, and such is the case also in the realm of metric structures.

In the last section, we discus infinite and dynamic versions of the game and connect it to infinitary logic. The connections between games and infinitary extensions of continuous first-order logic turn out not to be as straightforward as the connection between first-order continuous logic and games of finite length.

\section{Basic Definitions and Facts}

\subsection{Moduli of Uniform Continuity}

In the study of the semantics of continuous first-order logic, one important concept is that of a modulus of uniform continuity for a given function between metric spaces. At least two variants are found in the literature:
\begin{enumerate}
    \item A function $\Delta\colon[0,\infty)\to[0,\infty)$ is a modulus for a function $f\colon M\to M'$, where $M$ and $M'$ are metric spaces, if for every $\varepsilon>0$ and $x,y\in M$,
    \[
        d(x,y)<\Delta(\varepsilon) \implies d(f(x),f(y))\leq\varepsilon.
    \]
    \item A function $\Delta\colon[0,\infty)\to[0,\infty)$ is a modulus for a function $f\colon M\to M'$ if $\Delta(0) = 0$, $\Delta$ is continuous at $0$ and for all $x,y\in M$, we have
    \[
        d(f(x),f(y)) \leq \Delta(d(x,y)).
    \]
\end{enumerate}
It is easy to see that the former kind of a modulus for $f$ exists if and only if the latter kind of a modulus for $f$ exists if and only if $f$ is uniformly continuous. We adopt the latter formulation of a modulus, as it is more convenient in our setting. We may also make more assumptions about $\Delta$ to make it behave more nicely without losing generality. If $\Delta$ is a modulus for $f$, then by setting
\[
    \Delta'(t) = \inf\{ at + b \mid \text{$a,b\in[0,\infty)$ and $\Delta(s) \leq as+b$ for all $s\in[0,\infty)$} \}
\]
we obtain another modulus $\Delta'$ for $f$, with the additional properties that $\Delta'$ is continuous (everywhere), non-decreasing and also subadditive. We will be refering to moduli $\Delta$ with such properties without a mention to any particular function $f$ whose modulus $\Delta$ would be. This leads to the following definition.

\begin{definition}
    Let $\Delta\colon[0,\infty)\to[0,\infty)$ be a function.
    \begin{enumerate}
        \item We call $\Delta$ a \emph{modulus (of uniform continuity)} if
        \begin{enumerate}
            \item it vanishes at zero, i.e. $\Delta(0)=0$,
            \item it is non-decreasing and subadditive, i.e.
            \[
                \Delta(x)\leq\Delta(x+y)\leq\Delta(x)+\Delta(y)
            \]
            for all $x,y\in[0,\infty]$, and
            \item it is continuous.
        \end{enumerate}
        \item If $f\colon M\to M'$ is a function between two metric spaces, then we say that $f$ \emph{obeys} or \emph{respects} $\Delta$ if
        \[
            d(f(x),f(y))\leq\Delta(d(x,y))
        \]
        for all $x,y\in M$.
    \end{enumerate}
\end{definition}

\subsection{Metric Structures and Continuous First-order Logic}

Next we briefly recall the framework of metric structures, as well as continuous first-order logic $\CFO$. A good exposition is provided by \cite{MR2436146}. A metric signature $\sigma$ is a set of predicate, function and constant symbols together with the following additional information.
\begin{itemize}
    \item To each predicate and function symbol $s$ we assign a natural number $\ar(s)$, called the arity of $s$.
    \item To each predicate and function symbol $s$ we assign a modulus $\modulus(s)$.
    \item To each predicate symbol $P$, we assign a compact interval $\bound(P)$. Additionally, the signature specifies a number $D\in[0,\infty)$, intended to be an upper bound for the diameter of $\sigma$-structures.
\end{itemize}

\begin{definition}
    Let $\sigma$ be a signature. A $\sigma$-structure $\A$ is a complete metric space $(A,d)$ with $d(A)\leq D$, together with interpretations $s^\A$ of symbols $s\in\sigma$, satisfying the following.
    \begin{enumerate}
        \item For every predicate symbol $P$, $P^\A$ is a function $A^{\ar(P)}\to\bound(P)$ that obeys the modulus $\modulus(P)$.
        \item For every function symbol $f$, $f^\A$ is a function $A^{\ar(f)}\to A$ that obeys the modulus $\modulus(f)$.
        \item For every constant symbol $c$, $c^\A$ is an element of $A$.
    \end{enumerate}
    We denote by $\dom(\A)$ either the set $A$ or the metric space $(A,d)$, depending on the context. We will also simply write $a\in\A$ when we mean $a\in A$. We denote by $\str(\sigma)$ the class of all $\sigma$-structures.
\end{definition}

If $\sigma'\subseteq\sigma$, $\A$ is a $\sigma$-structure and $\B$ is a $\sigma'$-structure, we call $\B$ the $\sigma'$-reduct of $\A$, call $\A$ the $\sigma$-expansion of $\B$ and write $\B = \A\restriction_{\sigma'}$ if $\dom(\B) = \dom(\A)$ and $s^\B = s^\A$ for all $s\in\sigma'$. If $a_0,\dots,a_{n-1}\in\A$, then by $(\A,a_0,\dots,a_{n-1})$ we denote the $\sigma\cup\{c_0,\dots,c_{n-1}\}$-expansion of $\A$ for fresh constant symbols $c_i$, $i<n$, such that $c_i^{(\A,a_0,\dots,a_{n-1})} = a_i$.

\begin{definition}
    Let $\sigma$ be a signature. We fix a countably infinite set $\{v_i \mid i<\omega\}$ of variable symbols. The set of $\sigma$-formulae of $\CFO$ is defined as follows.
    \begin{enumerate}[label=(\roman*)]
        \item If $t$ and $t'$ are $\sigma$-terms, then $d(t,t')$ is a(n atomic) $\sigma$-formula, and if $P\in\sigma$ is an $n$-ary predicate symbol and $t_0,\dots,t_{n-1}$ are $\sigma$-terms, then $P(t_0,\dots,t_{n-1})$ is a(n atomic) $\sigma$-formula. The set of $\sigma$-terms is defined as in ordinary first-order logic.
        \item If $\varphi_0,\dots,\varphi_{n-1}$ are $\sigma$-formulae and $u\colon\prod_{i<n}\bound(\varphi_i)\to\R$ is a continuous function, then $u(\varphi_0,\dots,\varphi_{n-1})$ is a $\sigma$-formula. In this context we call $u$ an $n$-ary connective.
        \item If $\varphi$ is a $\sigma$-formula and $x$ is a variable, then $\inf_x\varphi$ and $\sup_x\varphi$ are $\sigma$-formulae.
    \end{enumerate}
\end{definition}

We denote by $\CFO[\sigma]$ the set of all $\sigma$-formulae. The set of free variables of a formula is defined as in classical first-order logic, with $\inf$ and $\sup$ acting as quantifiers. A formula with no free variables is called a sentence. If the free variables of $\varphi$ are among $x_0,\dots,x_{n-1}$, we write $\varphi(x_0,\dots,x_{n-1})$. Similarly, if the variables occuring in a term $t$ are among $x_0,\dots,x_{n-1}$, we write $t(x_0,\dots,x_{n-1})$. The \emph{quantifier rank} $\qr(\varphi)$ of a formula $\varphi$ counts the nesting of quantifiers and is defined recursively as follows. If $\varphi$ is atomic, then $\qr(\varphi) = 0$. If $\varphi = u(\psi_0,\dots,\psi_{n-1})$, then $\qr(\varphi) = \max_{i<n}\qr(\psi_i)$. If $\varphi = Q_x\psi$ with $Q\in\{\inf{},\sup{}\}$, then $\qr(\varphi) = \qr(\psi)+1$.

\begin{definition}
    Let $\sigma$ be a signature and $\A$ a $\sigma$-structure.
    \begin{enumerate}[label=(\Roman*)]
        \item If $t(x_0,\dots,x_{n-1})$ is a $\sigma$-term, the interpretation $t^\A$ of $t$ in $\A$ is defined as in ordinary first-order logic. More so than in first-order logic, we think of $t^\A$ as an $n$-ary function on $\A$. It follows that $t^\A$ is also uniformly continuous and respects a modulus that can be easily computed from the moduli of function symbols occurring in $t$.
        \item If $\varphi(x_0,\dots,x_{n-1})$ is a $\sigma$-formula, the interpretation $\varphi^\A$ of $\varphi$ in $\A$ is a function $\A^n\to\R$ defined as follows.
        \begin{enumerate}[label=(\roman*)]
            \item If $\varphi = d(t_0(x_0,\dots,x_{n-1}),t_1(x_0,\dots,x_{n-1}))$ for some $\sigma$-terms $t_0$ and $t_1$, then $\varphi^\A(a_0,\dots,a_{n-1}) = d(t_0^\A(a_0,\dots,a_{n-1}),t_1^\A(a_0,\dots,a_{n-1}))$ for all $a_0,\dots,a_{n-1}\in\A$.
            \item If $\varphi = P(t_0(x_0,\dots,x_{n-1}),\dots,t_{m-1}(x_0,\dots,x_{n-1}))$ for an $m$-ary predicate symbol $P\in\sigma$ and $\sigma$-terms $t_0,\dots,t_{m-1}$, then
            \[
                \hspace{5em} \varphi^\A(a_0,\dots,a_{n-1}) = P^\A(t_0^\A(a_0,\dots,a_{n-1}),\dots,t_{m-1}^\A(a_0,\dots,a_{n-1}))
            \]
            for all $a_0,\dots,a_{n-1}\in\A$.
            \item If $\varphi = u(\psi_0(x_0,\dots,x_{n-1}),\dots,\psi_{m-1}(x_0,\dots,x_{n-1})$ for an $m$-ary connective $u$ and $\sigma$-formulae $\psi_i$, $i<m$, then
            \[
                \hspace{5em} \varphi^\A(a_0,\dots,a_{n-1}) = u(\psi_0^\A(a_0,\dots,a_{n-1}),\dots,\psi_{m-1}^\A(a_0,\dots,a_{n-1}))
            \]
            for all $a_0,\dots,a_{n-1}\in\A$.
            \item If $\varphi = \inf_{x_n}\psi(x_0,\dots,x_n)$, then
            \[
                \hspace{5em} \varphi^\A(a_0,\dots,a_{n-1}) = \inf_{a_n\in\A}\psi^\A(a_0,\dots,a_n)
            \]
            for all $a_0,\dots,a_{n-1}\in\A$. The interpretation of $\sup_{x_n}\psi(x_0,\dots,x_n)$ is analogous.
        \end{enumerate}
    \end{enumerate}
\end{definition}

Given a modulus $\Delta$, we say that a formula $\varphi$ obeys $\Delta$ if $\varphi^\A$ obeys $\Delta$ for every structure $\A$. Every formula $\varphi$ has a modulus that it obeys and the least such modulus, which we denote by $\modulus(\varphi)$, can be computed from the moduli $\modulus(s)$ of predicate and function symbols $s\in\sigma$, as well as the moduli of connectives, as follows.
\begin{itemize}
    \item If $t$ is a $\sigma$-term, then a modulus $\modulus(t)$ is defined as follows: if $t$ is a variable or a constant symbol, then $\modulus(t) = 0$, and if $t = f(t_0,\dots,t_{n-1})$ for an $n$-ary function symbol $f\in\sigma$, then $\modulus(t)$ is the function $r\mapsto\modulus(f)(\max_{i<n}\modulus(t_i)(r))$.
    \item If $\varphi = P(t_0,\dots,t_{n-1})$ for an $n$-ary predicate symbol $P\in\sigma\cup\{d\}$ $\sigma$-terms $t_0,\dots,t_{n-1}$, then $\modulus(\varphi)$ is the function $r\mapsto\modulus(P)(\max_{i<n}\modulus(t_i)(r))$.
    \item If $\varphi = u(\varphi_0,\dots,\varphi_{n-1})$ for an $n$-ary connective $u$, then $\modulus(\varphi)$ is the function $r\mapsto\Delta_u(\max_{i<n}\modulus(\varphi_i)(r))$, where
    \[
        \Delta_u(r) = \sup\{ \abs{u(x_0,\dots,x_{n-1}) - u(y_0,\dots,y_{n-1})} : \max_{i<n}\abs{x_i - y_i}\leq r \}
    \]
    for all $r\in[0,\infty)$.
    \item If $\varphi = Qx\psi$ for $Q\in\{\inf,\sup\}$, then $\modulus(\varphi) = \modulus(\psi)$.
\end{itemize}

Similarly, each formula $\varphi(x_0,\dots,x_{n-1})$ has a bound, i.e. a compact interval $I$, such that for every structure $\A$, $\ran(\varphi^\A)\subseteq I$. For simplicity (and without loss of generality), we will henceforth assume that the signature $\sigma$ is such that $D = 1$ and $\bound(P) = [0,1]$ for all predicate symbols $P$ and $n$-ary connectives are functions $[0,1]^n\to[0,1]$. Thus the bound for any formula is $[0,1]$.

Two $\sigma$-formulae $\varphi(x_0,\dots,x_{n-1})$ and $\psi(x_0,\dots,x_{n-1})$ are \emph{logically equivalent} if
\[
    \varphi^\A(a_0,\dots,a_{n-1}) = \psi^\A(a_0,\dots,a_{n-1})
\]
for all $\sigma$-structures $\A$ and $a_0,\dots,a_{n-1}\in\A$. Note that with the convention that every formula is bounded by $[0,1]$, $\sup_x\varphi$ is logically equivalent to $1\dotminus\inf_x(1\dotminus\varphi)$, where $x\dotminus y = \max\{0, x-y\}$.

One thing to remark is that given, say, two unary connectives $u$ and $u'$ and a formula $\varphi$, the two formulae $u(u'(\varphi))$ and $(u\circ u')(\varphi)$ are not, as strings, the same, but they are of course logically equivalent.

Next we define the notion of a $\Delta$-formula for a given modulus $\Delta$. This will play an important role in the proof of our main result Theorem~\ref{Main theorem}.

\begin{definition}
    Given a modulus $\Delta$, we define the set of $\Delta$-formulae as follows.
    \begin{enumerate}
        \item If $\varphi$ is atomic, then it is a $\Delta$-formula if $\modulus(\varphi) \leq \Delta$.
        \item If $\varphi$ is a $\Delta$-formula, then so are $\sup_x\varphi$ and $\inf_x\varphi$.
        \item If $u$ is an $n$-ary connective that either respects $\Delta$ or is $1$-Lipschitz and for each $i<n$, $\varphi_i$ is either an atomic $\Delta$-formula or a $\Delta$-formula of the form $Qx\psi$ for $Q\in\{\sup,\inf\}$, then $u(\varphi_0,\dots,\varphi_{n-1})$ is a $\Delta$-formula.
    \end{enumerate}
\end{definition}

A $\Delta$-sentence is a $\Delta$-formula with no free variables.

Note that a $\Delta$-formula does not necessarily obey $\Delta$, but the intention is that $\Delta$-formulae of quantifier rank $\leq n$ obey $\Delta^n$, i.e. we tie the steepness provided by connectives (as measured by $\Delta$) to the quantifier rank of the formula.

We immediately make the following basic observation.

\begin{lemma}
    Every formula $\varphi$ is equivalent to a $\Delta$-formula of the same quantifier rank for some modulus $\Delta$. Moreover, the equivalent formula can be constructed simply by composing consequtive connectives in $\varphi$.
\end{lemma}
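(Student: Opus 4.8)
The plan is to proceed in two stages: first transform $\varphi$ into a logically equivalent formula $\varphi^*$ of the same quantifier rank in which no connective is applied directly to another connective (call this \emph{connective normal form}), and then exhibit a single modulus $\Delta$ witnessing that $\varphi^*$ is a $\Delta$-formula. Separating these two tasks avoids the nuisance of reconciling the different moduli that a naive one-pass induction would produce at different subformulas.

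For the first stage I would induct on the structure of $\varphi$. The atomic and quantifier cases are immediate: an atomic formula is already in the desired form, and if $\psi$ has been replaced by an equivalent normal-form $\psi^*$ with $\qr(\psi^*)=\qr(\psi)$, then $\inf_x\psi^*$ and $\sup_x\psi^*$ are in normal form with the same quantifier rank. The interesting case is $\varphi=u(\varphi_0,\dots,\varphi_{n-1})$. After replacing each $\varphi_i$ by an equivalent normal-form $\varphi_i^*$, every $\varphi_i^*$ is either atomic, of the form $Q_x\psi$, or of the form $u_i(\chi_{i,0},\dots,\chi_{i,m_i-1})$ with each $\chi_{i,j}$ atomic or quantified. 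Treating the first two cases as $u_i=\id$, $m_i=1$, $\chi_{i,0}=\varphi_i^*$, I would define the composed connective
\[
    w(\bar x) = u\bigl(u_0(x_{0,0},\dots,x_{0,m_0-1}),\dots,u_{n-1}(x_{n-1,0},\dots,x_{n-1,m_{n-1}-1})\bigr),
\]
a continuous function $[0,1]^N\to[0,1]$, hence a legitimate connective, whose arguments are exactly the $\chi_{i,j}$. Then $w(\dots,\chi_{i,j},\dots)$ is logically equivalent to $\varphi$, is in connective normal form, and has quantifier rank $\max_{i,j}\qr(\chi_{i,j})=\max_i\qr(\varphi_i^*)=\qr(\varphi)$, as required.

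For the second stage, note that $\varphi^*$ is finite, so it contains only finitely many atomic subformulae $\alpha$ and finitely many connectives $w$. Each $\modulus(\alpha)$ is a modulus, and each connective $w$, being continuous on the compact domain $[0,1]^{N}$, is uniformly continuous and thus possesses a least modulus $\Delta_w$. I would let $\Delta$ be the pointwise maximum of all these finitely many moduli; a short check shows that the pointwise maximum of finitely many moduli is again a modulus, the only nonobvious point being subadditivity, which follows from $\max(a+b,c+d)\le\max(a,c)+\max(b,d)$ together with the subadditivity of each summand. By construction $\modulus(\alpha)\le\Delta$ for every atomic subformula of $\varphi^*$ and every connective of $\varphi^*$ respects $\Delta$.

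Finally, a single structural induction on $\varphi^*$ confirms that it is a $\Delta$-formula: atomic subformulae qualify by clause~(1) since their moduli are $\le\Delta$; quantified subformulae $Q_x\psi$ qualify by clause~(2) once $\psi$ is known to be a $\Delta$-formula; and each top connective $w$ applies clause~(3) via the ``respects $\Delta$'' disjunct, since—thanks to the normal form—its arguments are precisely atomic $\Delta$-formulae or $\Delta$-formulae of the form $Q_x\psi$. The only genuinely delicate points are the bookkeeping in the composed-connective case, especially the uniform treatment of atomic, quantified and connective arguments through the identity connective, and the accompanying verification that quantifier rank is preserved under composition; everything else is routine.
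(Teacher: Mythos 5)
Your proof is correct and follows exactly the route the paper intends: the paper states this lemma as an immediate observation whose proof is the ``moreover'' clause itself, namely collapsing nested connectives into single composed connectives (your connective normal form) and then taking a modulus $\Delta$ dominating the finitely many moduli of the resulting atomic subformulae and composed connectives. Your two supporting observations---that the pointwise maximum of finitely many moduli is again a modulus, and that a continuous connective on $[0,1]^N$ has a least modulus (the paper's $\Delta_u$)---are both sound and fill in the bookkeeping the paper leaves implicit.
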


\subsection{Partial Isomorphisms and Elementary Equivalence}

\begin{definition}
    Let $\varepsilon \geq 0$, let $\Delta$ be a modulus, and let $\A$ and $\B$ be $\sigma$-structures.
    \begin{enumerate}
        \item A partial function $f\colon\A\to\B$ is called a partial $(\varepsilon,\Delta)$-iso\-mor\-phism if for all atomic $\Delta$-formulae $\varphi(x_0,\dots,x_{k-1})$ and $a_0,\dots,a_{k-1}\in\A$,
        \[
            \abs{\varphi^\A(a_0,\dots,a_{k-1}) - \varphi^\B(f(a_0),\dots,f(a_{k-1}))} \leq \varepsilon.
        \]

        \item A partial $\varepsilon$-isomorphism is a function which is a partial $(\varepsilon,\Delta)$-iso\-mor\-phism for every modulus $\Delta$. A partial isomorphism is a partial $0$-isomorphism. If the function is total and surjective, we drop the word ``partial''.
    \end{enumerate}
\end{definition}

Note that our definition of a partial isomorphism does not require that, whenever $f\colon\A\to\B$ is a partial isomorphism, $c^\A\in\dom(f)$ and $f(c^\A) = c^\B$ for all constant symbols $c\in\sigma$. However, if we do have $c^\A\in\dom(f)$, then for all $\varepsilon>0$,
\begin{align*}
    d(c^\B,f(c^\A)) = \abs{d(c^\A,c^\A) - d(c^\B,f(c^\A)} \leq \varepsilon,
\end{align*}
whence $f(c^\A) = c^\B$.

\begin{definition}
    Given $\varepsilon\geq 0$, a modulus $\Delta$, $n<\omega$ and $\sigma$-structures $\A$ and $\B$, we write $\A\equiv^{\varepsilon,\Delta}_n\B$ if
    \[
        \abs{\varphi^\A - \varphi^\B} \leq \varepsilon
    \]
    for all $\Delta$-sentences $\varphi$ of quantifier rank $\leq n$. We write $\A\equiv_n^\varepsilon\B$ if $\A\equiv^{\varepsilon,\Delta}_n\B$ for all $\Delta$. We write $\A\equiv_n\B$ if $\A\equiv_n^0\B$ and say in such a case that $\A$ and $\B$ are \emph{elementarily equivalent up to quantifier rank $n$}. We write $\A\equiv\B$ if $\A\equiv_n\B$ for all $n$ and say in such a case that $\A$ and $\B$ are \emph{elementarily equivalent}.
\end{definition}

\section{The Space of $\Delta$-formulae}

In this section we show that for a given modulus $\Delta$ and a natural number $n$, the set of $\Delta$-formulae with a fixed finite set of free variables and quantifier rank $\leq n$, equipped with the logical distance, is a \emph{totally bounded} pseudometric space. A (pseudo)metric space $M$ is totally bounded, or \emph{precompact}, if for any $\varepsilon>0$, $M$ can be covered with finitely many sets of diameter $<\varepsilon$. If $M$ is complete, then it is totally bounded if and only if it is compact.

Totally boundedness is, in a sense, approximate finiteness. A set of formulae of $\CFO$ being totally bounded with respect to the logical distance defined below corresponds to a set of formulae of classical first-order logic being finite up to logical equivalence. We begin by showing the set of atomic formulae is totally bounded exactly when the signature is finite and relational.

\begin{definition}
    The logical distance between two $\sigma$-formulae $\phi(\bar{x})$ and $\psi(\bar{x})$ with the same free variables is defined as
    \[
        d(\varphi,\psi) = \sup_{\A\in\str(\sigma)}\sup_{\bar{a}\in\A^k}\abs{\varphi^\A(\bar{a}) - \psi^\A(\bar{a})}.
    \]
\end{definition}

Note that two formulae are logically equivalent if and only if their logical distance is $0$.

\begin{lemma}
    Given $k<\omega$, the set of atomic $\sigma$-formulae $\varphi(v_0,\dots,v_{k-1})$ equipped with the logical distance is totally bounded if and only if $\sigma$ is relational and finite.
\end{lemma}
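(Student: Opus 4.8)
The plan is to prove both implications, treating the reverse (harder) direction by contraposition. The guiding principle throughout is that the logical distance is a supremum over \emph{all} $\sigma$-structures: to show that two atomic formulae are far apart it suffices to exhibit a single, possibly very small, structure on which their interpretations differ substantially, whereas to bound a distance from above one must argue uniformly over all structures using the prescribed moduli. Accordingly, for non-total-boundedness I will always aim to produce an infinite family of atomic formulae that is $\delta$-separated for some fixed $\delta>0$, since such a family cannot be covered by finitely many sets of diameter below $\delta$.

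For the direction ``relational and finite $\Rightarrow$ totally bounded'', I would first observe that when $\sigma$ has no function symbols of positive arity, the only $\sigma$-terms with variables among $v_0,\dots,v_{k-1}$ are these variables themselves together with the (finitely many) constant symbols. Hence there are only finitely many terms, and since there are only finitely many predicate symbols, there are only finitely many atomic formulae $\varphi(v_0,\dots,v_{k-1})$ \emph{as strings}: namely $d(t,t')$ and $P(t_0,\dots,t_{m-1})$ for the finitely many available terms and predicates. A finite pseudometric space is trivially totally bounded, so this direction is immediate.

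For the converse I argue the contrapositive, splitting into cases according to which condition fails and using minimal structures in each case. If there are infinitely many constants $c_i$, then the formulae $d(v_0,c_i)$ are pairwise at distance $1$: for $i\neq j$, a two-point structure in which $c_i,c_j$ are placed at distance $1$ and $v_0$ at $c_i$ witnesses the gap. If there are infinitely many predicate symbols $P_i$, the formulae $P_i(v_0,\dots,v_0)$ are separated using one-point structures on which distinct predicates are given the extremal values $0$ and $1$; constant interpretations respect every modulus, so this is unobstructed. Likewise, if there are infinitely many function symbols $f_i$, the formulae $d(v_0,f_i(v_0,\dots,v_0))$ are separated by two-point structures in which $f_i$ and $f_j$ are interpreted as the two distinct constant functions, and again constant interpretations obey any modulus, so this case is insensitive to the moduli.

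The remaining and genuinely delicate case is when $\sigma$ is finite but contains a function symbol $f$ of positive arity, so that the only source of infinitely many atomic formulae is \emph{nesting}. Here I would use the iterated terms $t_0=v_0$, $t_{m+1}=f(t_m,\dots,t_m)$ and try to separate the formulae $d(v_0,t_m)$. I expect this to be the main obstacle, because one can no longer use constant interpretations, and the separation depends essentially on $\modulus(f)$, which governs exactly how far iterated applications of $f$ may spread points apart. When $\modulus(f)$ permits (near-)isometric interpretations, for instance the natural choice $\modulus(f)=\id$, I would realise $f$ as a rotation by an angle $\alpha$ on a circle of circumference $1$; then $d(v_0,t_m)$ takes the constant value $h(m\alpha)$ with $h(t)=\min(\{t\},1-\{t\})$ and $\{t\}$ the fractional part, so taking the subfamily $m=2^k$ and, for $k<k'$, the angle $\alpha=2^{-(k+1)}$ forces $h(2^k\alpha)=\tfrac12$ and $h(2^{k'}\alpha)=0$, giving an infinite $\tfrac12$-separated family. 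The point requiring real care is precisely whether $\modulus(f)$ allows enough spreading: a strongly contracting modulus makes the iterated terms collapse in every structure, so this is where the interplay between nesting of function symbols and moduli of uniform continuity must be confronted, and where the argument must be made robust.
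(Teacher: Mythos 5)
Your forward direction and your three infinite-signature cases are correct and essentially coincide with the paper's treatment: the paper handles the relational case by showing the space of atomic formulae is \emph{discrete} (two distinct predicates, or one predicate applied to two distinct tuples of terms, are at logical distance $\geq 1$, witnessed by two-point structures), so that total boundedness reduces to there being only finitely many atomic formulae, which is your counting argument plus your separated families. The real point of comparison is the case you yourself flag as unresolved: finite $\sigma$ containing a function symbol $f$. The paper settles it with a cyclic shift in place of your circle rotation: it takes $\A_n$ to be the $n$-element discrete space $\{0,\dots,n-1\}$ with $f^{\A_n}(i)=i+1 \bmod n$, sets $t_0=v_0$, $t_{i+1}=f(t_i)$, $\varphi_i=d(t_0,t_i)$, and gets $d(\varphi_i,\varphi_j)\geq\abs{\varphi_i^{\A_j}(0)-\varphi_j^{\A_j}(0)}=1$ for $i<j$, an infinite $1$-separated family. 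This is your rotation idea in its combinatorially simplest form, with no angles to choose and all distances equal to $0$ or $1$.

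However, the difficulty you raise at the end is a genuine gap --- and it is a gap in the paper's proof as well, not something the paper knows how to avoid. The cyclic shift is a bijection of the discrete space, so it moves points at distance $1$ to points at distance $1$ and hence obeys $\modulus(f)$ only if $\modulus(f)(1)\geq 1$, exactly as your rotations require $\modulus(f)(t)\geq t$. Moreover, no construction can work for a genuinely contracting modulus, because then the ``only if'' direction of the lemma is false. Take $\sigma=\{f\}$ with $f$ unary and $\modulus(f)(t)=t/2$, a legitimate modulus (non-decreasing, subadditive, continuous, vanishing at $0$). In every $\sigma$-structure, $f^\A$ is a $\tfrac{1}{2}$-contraction of a complete space of diameter $\leq 1$, hence has a unique fixed point $p$ with $d((f^\A)^m(x),p)\leq 2^{-m}$ for all $x$. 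So, given $N$, every atomic formula $d(f^m(v_i),f^{m'}(v_{i'}))$ with $m,m'\geq N$ takes values in $[0,2^{-N+1}]$; and two formulae sharing the side of depth $m<N$ whose other depths are both $\geq N$ differ uniformly by at most $2^{-N+1}$, since $\abs{d(x,y)-d(x,z)}\leq d(y,z)$. Grouping accordingly, and putting the finitely many formulae with both depths $<N$ in singletons, covers the space by finitely many sets of diameter $\leq 2^{-N+2}$, so it is totally bounded although $\sigma$ is not relational. Thus what you call ``the point requiring real care'' is an unstated hypothesis on the signature (e.g.\ that $\modulus(f)$ dominates the identity on $[0,1]$, which makes your rotation argument go through), without which the lemma fails; note that only the unaffected ``if'' direction is used later in the proof of Theorem~\ref{Totally bounded formulae}.
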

\begin{proof}
    We first show that if $\sigma$ contains a function symbol $f$, then the space is not totally bounded. Without loss of generality $f$ is unary. For each $n<\omega$, let $\A_n$ be a $\sigma$-structure whose domain is the $n$-element discrete metric space $\{0,\dots,n-1\}$ and $f^{\A_n}(i) = i+1 \mod n$. We define terms $t_i$, $i<\omega$, by setting $t_0 = v_0$ and $t_{i+1} = f(t_i)$. Then we let $\varphi_i(v_0) = d(t_0, t_i)$. Note that $t_i^{\A_n}(0) = (f^{\A_n})^i(0) = i \mod n$ for all $i$. Now for $i<j$,
    \begin{align*}
        d(\varphi_i,\varphi_j) &\geq \abs{\varphi_i^{\A_j}(0) - \varphi_j^{\A_j}(0)} \\
        &= \abs{d\left(t_0^{\A_j}(0),t_i^{\A_j}(0)\right) - d\left(t_0^{\A_j}(0),t_j^{\A_j}(0)\right)} \\
        &= \abs{d(0, i) - d(0, 0)} \\
        &= d(0,i) \\
        &= 1.
    \end{align*}
    Hence the space cannot be covered by finitely many balls of radius $1/2$.

    Then suppose that $\sigma$ is relational. We show that the space is discrete. Let $P,Q\in \sigma\cup\{d\}$ be two distinct predicates and denote $\varphi = P(t_0(\bar v),\dots,t_{n-1}(\bar v))$ and $\psi = Q(t'_0(\bar v),\dots,t'_{m-1}(\bar v))$ for some terms $t_i$ and $t'_j$. We let $\A$ be the two-element discrete space, making it a $\sigma$-structure by setting $P^\A \equiv 0$ and $R^\A \equiv 1$ for any $R\in\sigma\setminus\{P\}$ (or vice versa if it happens that $P = d$), whence
    \begin{align*}
        d(\varphi,\psi) &\geq \sup_{\bar a\in\A^k}\abs{P^\A(t_0^\A(\bar a),\dots,t_{n-1}^\A(\bar a)) - Q^\A((t'_0)^\A(\bar a),\dots,(t_{m-1})^\A(\bar a))} \\
        &= 1.
    \end{align*}

    Then let $\varphi = P(t_0,\dots,t_{n-1})$ and $\psi = P(t'_0,\dots,t'_{n-1})$, where for at least one $i<n$ we have $t_i\neq t'_i$. As $\sigma$ is relational, each term is either a constant symbol or a variable, and distinguishing between the two is irrelevant in this context, as it does not make a difference whether we define suitable interpretations for constants or find suitable parameters; hence we may assume they are all constants. Let $i<n$ be such that $t_i\neq t'_i$. Now let $\B$ be the two-element discrete space, with $t_i^\B = 0$, $(t'_i)^\B = 1$ and $P^\B(b_0,\dots,b_{i-1},0,b_{i+1},\dots,b_{n-1}) = 0$ and $P^\B(b_0,\dots,b_{i-1},1,b_{i+1},\dots,b_{n-1}) = 1$ for all $b_j\in\B$, $j\neq i$. Then
    \begin{align*}
        d(\varphi,\psi) &\geq \abs{P^\B(t_0^\B,\dots,t_{n-1}^\B) - P^\B((t'_0)^\B,\dots,(t'_{n-1})^\B)} \\
        &\geq 1.
    \end{align*}

    This shows that the space of all atomic formulae (in variables $v_0,\dots,v_{k-1}$) is discrete. It is clear that a discrete metric space is totally bounded if and only if it is finite.
\end{proof}

\begin{definition}
    For a signature $\sigma$, a modulus $\Delta$ and $n,k<\omega$, we denote by $X_{\sigma,\Delta,n,k}$ the set
    \[
        \{\varphi(v_0,\dots,v_{k-1})\in \CFO[\sigma] \mid \text{$\varphi$ is a $\Delta$-formula and $\qr(\varphi)\leq n$}\}
    \]
    equipped with the logical distance, making it a pseudometric space.
\end{definition}

In the following theorem, we show that the space $X_{\sigma,\Delta,n,k}$ is totally bounded for a finite and relational signature $\sigma$. This corresponds to the classical result that, up to logical equivalence, there are only finitely many formulae of classical first-order logic of quantifier rank $\leq n$ with a given finite set of free variables.

\begin{theorem}\label{Totally bounded formulae}
     Let $\sigma$ be a finite relational signature. For any modulus $\Delta$ and $n,k<\omega$, the space $X_{\sigma,\Delta,n,k}$ is totally bounded.
\end{theorem}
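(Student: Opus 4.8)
The plan is to argue by induction on the quantifier rank $n$, proving simultaneously for all $k$ that $X_{\sigma,\Delta,n,k}$ is totally bounded, and using as an auxiliary object the set $B_{n,k}$ of \emph{basic} $\Delta$-formulae of quantifier rank $\le n$ in the variables $v_0,\dots,v_{k-1}$, i.e. those that are either atomic or of the form $Qx\psi$ with $Q\in\{\inf,\sup\}$. The key structural observation, forced by clause (3) of the definition of a $\Delta$-formula, is that a connective may only be applied to basic formulae. Consequently every $\Delta$-formula of quantifier rank $\le n$ in $v_0,\dots,v_{k-1}$ is of the form $u(\gamma_0,\dots,\gamma_{m-1})$ for a connective $u$ that is $1$-Lipschitz or respects $\Delta$ and basic formulae $\gamma_j\in B_{n,k}$ (atomic and quantified formulae themselves being recovered by taking $u$ to be the unary identity). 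The whole argument thus splits into two tasks: showing $B_{n,k}$ is totally bounded, and showing that applying controlled connectives to a totally bounded set of basic formulae again produces a totally bounded set.

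For $B_{n,k}$: the atomic $\Delta$-formulae form a finite set, since for a finite relational $\sigma$ the space of all atomic formulae in $v_0,\dots,v_{k-1}$ is discrete and totally bounded by the preceding lemma. A quantified basic formula $Qx\psi$ of quantifier rank $\le n$ is, after renaming the bound variable to $v_k$ (yielding a logically equivalent formula), of the form $Qv_k\psi'$ with $\psi'\in X_{\sigma,\Delta,n-1,k+1}$. The operations $\psi\mapsto\inf_{v_k}\psi$ and $\psi\mapsto\sup_{v_k}\psi$ are $1$-Lipschitz for the logical distance, by the elementary inequality $\abs{\inf f-\inf g}\le\sup\abs{f-g}$. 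Hence the quantified basic formulae form the union of two Lipschitz images of $X_{\sigma,\Delta,n-1,k+1}$, which is totally bounded by the induction hypothesis; adjoining the finite set of atomic $\Delta$-formulae keeps $B_{n,k}$ totally bounded.

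The heart of the proof is the connective step, and the \emph{main obstacle} is that connectives may have arbitrary arity and that there are continuum-many of them. I would resolve this as follows. Fix $\varepsilon>0$ and choose $\delta>0$ with $\delta\le\varepsilon/2$ and $\Delta(\delta)\le\varepsilon/2$; let $\beta_0^*,\dots,\beta_{L-1}^*$ be a finite $\delta$-net of $B_{n,k}$, write $\bar\beta^*=(\beta_0^*,\dots,\beta_{L-1}^*)$, and set $(\bar\beta^*)^\A(\bar a)=((\beta_0^*)^\A(\bar a),\dots,(\beta_{L-1}^*)^\A(\bar a))\in[0,1]^L$. Given $\varphi=u(\gamma_0,\dots,\gamma_{m-1})$ with $\gamma_j\in B_{n,k}$, pick for each $j$ an index $i_j$ with $d(\gamma_j,\beta_{i_j}^*)\le\delta$, and let $\pi\colon[0,1]^L\to[0,1]^m$ be the coordinate-selection map $z\mapsto(z_{i_0},\dots,z_{i_{m-1}})$. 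Then $w=u\circ\pi$ is an $L$-ary connective that is again $1$-Lipschitz or respects $\Delta$ for the max-metric (as $\pi$ is $1$-Lipschitz and $\Delta$ is non-decreasing), and $w(\bar\beta^*)$ is logically equivalent to $u(\beta_{i_0}^*,\dots,\beta_{i_{m-1}}^*)$. Since $u$ is $1$-Lipschitz or respects $\Delta$, we get $d(\varphi,w(\bar\beta^*))\le\Delta(\delta)\le\varepsilon/2$ (respectively $\le\delta\le\varepsilon/2$). Thus every such $\varphi$ is $\varepsilon/2$-close to the set $\{w(\bar\beta^*):w\in W\}$, where $W$ is the family of all $L$-ary connectives that are $1$-Lipschitz or respect $\Delta$.

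Finally, $W$ is a uniformly bounded, equicontinuous family of functions on the compact space $[0,1]^L$ (equicontinuous with modulus $\max(\id,\Delta)$), so by the Arzelà--Ascoli theorem it is totally bounded for the supremum norm. Because $d(w(\bar\beta^*),w'(\bar\beta^*))\le\norm{w-w'}_\infty$, a finite $\varepsilon/2$-net of $W$ yields a finite $\varepsilon/2$-net of $\{w(\bar\beta^*):w\in W\}$ for the logical distance; combined with the previous paragraph this gives a finite $\varepsilon$-net of $X_{\sigma,\Delta,n,k}$, so that space is totally bounded. The base case $n=0$ is the same connective argument applied to the finite set $B_{0,k}$ of atomic $\Delta$-formulae. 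The essential point, and the reason the restriction to $\Delta$-formulae is exactly what is needed, is that it forces the equicontinuity of $W$ that makes Arzelà--Ascoli applicable; without such a bound on the steepness of connectives the family $W$ would fail to be totally bounded.
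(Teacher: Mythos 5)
Your proof is correct and follows essentially the same route as the paper's: induction on quantifier rank, a finite net of lower-rank quantified formulae, and Arzelà--Ascoli applied to the equicontinuous family of connectives that are $1$-Lipschitz or respect $\Delta$, with your coordinate-selection connective $w = u\circ\pi$ playing exactly the role of the paper's argument-duplicating connective $u^*$. The only differences are organizational but pleasant: you keep both $\inf$- and $\sup$-formulae in your net $B_{n,k}$ instead of reducing $\sup$ to $\inf$ via $1\dotminus$ as the paper does, and your base case (connectives applied to the finitely many atomic $\Delta$-formulae) is actually more careful than the paper's, which treats $X_{\sigma,\Delta,0,k}$ as a subspace of the atomic formulae even though quantifier-rank-$0$ $\Delta$-formulae also include connective combinations of atomics.
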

\begin{proof}
    Fix $\sigma$. We show by induction on $n$ that $X_{\sigma,\Delta,n,k}$ is totally bounded for all $k$ and $\Delta$. By the previous lemma, the space of all atomic $\sigma$-formulae with variables among $v_0,\dots,v_{k-1}$ is totally bounded, and thus, so is its subspace $X_{\sigma,\Delta,0,k}$. So suppose $X_{\sigma,\Delta,n,k}$ is totally bounded for all $\Delta$ and $k$, fix $\Delta$ and $k$ and look at $X_{\sigma,\Delta,n+1,k}$.
    Let $\varepsilon>0$. We wish to show that $X_{\sigma,\Delta,n+1,k}$ can be covered by finitely many sets of diameter $<\varepsilon$.
    
    Let $\delta>0$ be small enough that $\Delta(\delta)\leq\varepsilon/2$. By the induction hypothesis, $X_{\sigma,\Delta,n,k+1}$ is totally bounded so it has a finite cover $\{U_i \mid i<m\}$ with $d(U_i)<\delta$ for all $i<m$. Fix $\theta_i\in U_i$ for all $i<m$. By the Arzelà--Ascoli theorem\footnote{There are many versions of the Arzelà--Ascoli theorem. See e.g.~\cite{MR0070144}. The version we need here can be stated in the following form: if $X$ and $Y$ are compact metric spaces and $W$ is an equicontinuous set of functions $X\to Y$, then $W$ is totally bounded in the uniform topology of $C(X,Y)$.}, the set of all $m$-ary connectives $u\colon[0,1]^m\to[0,1]$ that either obey $\Delta$ or are $1$-Lipschitz is totally bounded, so it has a finite cover $\{V_i \mid i\in I\}$ with $d(V_i)<\varepsilon/2$ for all $i\in I$. Fix $u_i\in V_i$ for all $i\in I$. Let $\Sigma = \{u_i(\hat\theta_0,\dots,\hat\theta_{m-1}) \mid i\in I\}$, where $\hat\theta_i = \inf_{v_k}\theta_i$ for $i<m$. We show that $\Sigma$ is $\varepsilon$-dense in $X_{\sigma,\Delta,n+1,k}$, i.e. every $\varphi\in X_{\sigma,\Delta,n+1,k}$ is $\varepsilon$-close to some $\psi\in\Sigma$. As $\Sigma$ is a finite set of $\Delta$-formulae of quantifier rank $\leq n+1$, this proves the theorem.

    Now, let $\varphi\in X_{\Delta,n+1,k}$. The first thing to do is to massage $\varphi$ into a logically equivalent form $u(\hat\psi_0,\dots,\hat\psi_{l-1})$,
    where each $\hat\psi_i = \inf_{v_k}\psi_i$ for some $\psi_i\in X_{L,\Delta,n,k+1}$ and $u$ is a connective that either obeys $\Delta$ or is $1$-Lipschitz. If $\qr(\varphi)\leq n$, then $\varphi \equiv \id(\inf_{v_k}\varphi)$. If $\varphi = \inf_x\psi$ for some $\psi$, then by possibly changing names of bound variables we obtain $\varphi\equiv\id(\inf_{v_k}\psi^*)$ for some $\psi^*$. If $\varphi = \sup_x\psi$, then by again changing names of bound variables we obtain $\varphi\equiv\sup_{v_k}\psi^* \equiv 1\dotminus(\inf_{v_k}(1\dotminus\psi^*))$. As $t\mapsto 1\dotminus t$ is a $1$-Lipschitz connective, $1\dotminus\psi^*$ is equivalent to a $\Delta$-formula, and hence so is $\varphi$. For the connective case, we assume for simplicity of notation that $\varphi = u(Qx\psi)$ for unary $u$. By changing names of variables we may assume that $x = v_k$. If $Q = \inf$, we are done. Otherwise $\varphi \equiv u(1\dotminus(\inf_{v_k}(1\dotminus\psi)))$. Now again, since $t\mapsto 1\dotminus t$ is $1$-Lipschitz, $1\dotminus\psi$ is equivalent to a $\Delta$-formula. Left is to show that $t\mapsto u(1\dotminus t)$ is either $1$-Lipschitz or obeys $\Delta$. If $u$ is $1$-Lipschitz, then so is $u(1\dotminus t)$, so assume $u$ obeys $\Delta$. Then for all $t,s\geq 0$,
    \[
        \abs{u(1\dotminus t) - u(1\dotminus s)} \leq \Delta(\abs{(1\dotminus t) - (1\dotminus s)}).
    \]
    Now
    \begin{align*}
        \abs{(1\dotminus t) - (1\dotminus s)} &=
        \begin{cases}
            \abs{t-s} & \text{if $t,s\leq 1$}, \\
            \abs{1-t} &\text{if $t\leq 1 < s$}, \\
            \abs{1-s} &\text{if $s\leq 1 < t$ and} \\
            0 &\text{otherwise}, \\
        \end{cases}
    \end{align*}
    and in each case the expression is at most $\abs{t-s}$. Hence, as $\Delta$ is non-decreasing,
    \[
        \Delta(\abs{(1\dotminus t) - (1\dotminus s)}) \leq \Delta(\abs{t-s}),
    \]
    so we obtain
    \[
        \abs{u(1\dotminus t) - u(1\dotminus s)} \leq \Delta(\abs{t-s}).
    \]
    Thus $u(1\dotminus t)$ obeys $\Delta$.
    
    Let $\eta\colon l\to m$ be a function such that for each $j<l$, $\psi_j\in U_{\eta(j)}$; i.e. $\eta$ is an explicit choice function that picks for each $\psi_j$ a neighbourhood from the $\varepsilon$-cover $\{U_i \mid i<m\}$ of $X_{\sigma,\Delta,n,k+1}$. Without loss of generality, we may assume that $\eta$ is surjective.\footnote{If $\eta$ is not surjective, let $i_0,\dots,i_{p-1}$ enumerate $m\setminus\ran(\eta)$. Then define $\psi_{l+q} = \theta_{i_q}$ for all $q<p$. Now we extend $\eta$ into a function $\eta'\colon l+p\to m$ by setting $\eta(l+q) = i_q$ for all $q<p$. Then $\eta'$ is surjective, as $\psi_{l+q} = \theta_{i_q}\in U_{i_q} = U_{\eta'(l+q)}$ for all $q<p$. Then define a connective $u'\colon[0,1]^{l+p}\to[0,1]$ by
    \[
        u'(t_0,\dots,t_{l+p-1}) = u(t_0,\dots,t_{l-1})
    \]
    for all $t_0,\dots,t_{l+p-1}\in[0,1]$. Clearly also $u'$ obeys $\Delta$. Then we may replace the original formula by the equivalent formula $u'(\inf_{v_k}\psi_0,\dots,\inf_{v_k}\psi_{l+p-1})$, $l$ by $l+p$ and $\eta$ by $\eta'$.}
    As the order of the arguments of $u$ does not have an effect on whether $u$ obeys $\Delta$ or not\footnote{If $\pi$ is a permutation of the set $\{0,\dots,m-1\}$, then $u$ obeys $\Delta$ if and only if $u\circ(\Pr_{\pi(0)},\dots,\Pr_{\pi(m-1)})$ does.}, we may also assume that $\eta$ is non-decreasing.
    Then, define functions $c\colon m\to\omega$ and $\xi_i\colon[0,1]\to[0,1]^{c(i)}$, $i<m$, by $c(i) =\abs{\{j<l \mid \eta(j) = i\}}$ and $\xi_i(t) = (t,\dots,t)$. Then let $u^*\colon[0,1]^m\to[0,1]$ be defined by
    \[
        u^*(t_0,\dots,t_{m-1}) = u(\xi_0(t_0),\dots,\xi_{m-1}(t_{m-1})).
    \]
    Now $u^*(\hat\theta_0,\dots,\hat\theta_{m-1})$ is logically equivalent to the formula $u(\hat\theta_{\eta(0)},\dots,\hat\theta_{\eta(l-1)})$.
    Furthermore, $u^*$ obeys $\Delta$:
    \begin{align*}
        \hspace{2em}&\hspace{-2em}\abs{u^*(t_0,\dots,t_{m-1}) - u^*(t'_0,\dots,t'_{m-1})} \\
        &= \abs{u(\xi_0(t_0),\dots,\xi_{m-1}(t_{m-1})) - u(\xi_0(t'_0),\dots,\xi_{m-1}(t'_{m-1}))} \\
        &\leq \Delta(d((\xi_0(t_0),\dots,\xi_{m-1}(t_{m-1})),(\xi_0(t'_0),\dots,\xi_{m-1}(t'_{m-1})))) \\
        &= \Delta(\max_{i<m}d(t_i,t'_i)) \\
        &= \Delta(d((t_0,\dots,t_{m-1}), (t'_0,\dots,t'_{m-1})))
    \end{align*}
    for any $t_0,t'_0,\dots,t_{m-1},t'_{m-1}\geq 0$.

    Let $i\in I$ be such that $u^*\in V_i$ and denote $\chi = u_i(\hat\theta_0,\dots,\hat\theta_{m-1})$.
    To finish the proof, it suffices to show that $d(\varphi,\chi)<\varepsilon$.
    Now a standard argument involving infima gives that $d(\hat\psi_j,\hat\theta_{\eta(j)})\leq\delta$ for all $j<l$. From this and the fact that $u$ respects $\Delta$, it follows that
    \begin{align*}
        d(u&(\hat\psi_0,\dots,\hat\psi_{l-1}), u(\hat\theta_{\eta(0)},\dots,\hat\theta_{\eta(l-1)})) \\
        &= \sup_{\A,\bar a}\abs{u(\hat\psi_0^\A(\bar a),\dots,\hat\psi_{l-1}^\A(\bar a)) - u(\hat\theta_{\eta(0)}^\A(\bar a),\dots,\hat\theta_{\eta(l-1)}^\A(\bar a))} \\
        &\leq \sup_{\A,\bar a}\Delta\left(\max_{i<l}\abs{\hat\psi_i^\A(\bar a) - \hat\theta_{\eta(i)}^\A(\bar a)}\right) \\
        &\leq \Delta\left(\max_{i<l}\sup_{\A,\bar a}\abs{\hat\psi_i^\A(\bar a) - \hat\theta_{\eta(i)}^\A(\bar a)}\right) \\
        &\leq \Delta(\max_{i<l}d(\hat\psi_i,\hat\theta_{\eta(i)})) \\
        &\leq \Delta(\delta) < \frac{\varepsilon}{2}.
    \end{align*}
    On the other hand, by the choice of $i$, we obtain
    \begin{align*}
        d(u&(\hat\theta_{\eta(0)},\dots,\hat\theta_{\eta(l-1)}), u_i(\hat\theta_0,\dots,\hat\theta_{m-1})) \\
        &= \sup_{\A,\bar a}\abs{u(\hat\theta_{\eta(0)}^\A(\bar a),\dots,\hat\theta_{\eta(l-1)}^\A(\bar a)) -  u_i(\hat\theta_0^\A(\bar a),\dots,\hat\theta_{m-1}^\A(\bar a))} \\
        &\leq \sup_{\A,\bar a}\abs{u^*(\hat\theta_0^\A(\bar a),\dots,\hat\theta_{m-1}^\A(\bar a)) - u_i(\hat\theta_0^\A(\bar a),\dots,\hat\theta_{m-1}^\A(\bar a))} \\
        &\leq \sup_{\bar t\in[0,1]^m}\abs{u^*(\bar t) - u_i(\bar t)} \\
        &= \norm{u^* - u_i}_\infty < \frac{\varepsilon}{2}.
    \end{align*}
    Then we can conclude that
    \begin{align*}
        d(\varphi,\chi) &= d(u(\hat\psi_0,\dots,\hat\psi_{l-1}),u_i(\hat\theta_0,\dots,\hat\theta_{m-1})) \\
        &\leq d(u(\hat\psi_0,\dots,\hat\psi_{l-1}), u(\hat\theta_{\eta(0)},\dots,\hat\theta_{\eta(l-1)})) \\
        &\quad + d(u(\hat\theta_{\eta(0)},\dots,\hat\theta_{\eta(l-1)}), u_i(\hat\theta_0,\dots,\hat\theta_{m-1})) \\
        &< \frac{\varepsilon}{2} + \frac{\varepsilon}{2} = \varepsilon.
    \end{align*}
\end{proof}

\section{The Ehrenfeucht--Fraïssé Game}

\begin{definition}\label{Game definition}
    Let $\A$ and $\B$ be $\sigma$-structures with disjoint domains and $n<\omega$.
    \begin{enumerate}
        \item Let $\varepsilon>0$. The $\varepsilon$-Ehrenfeucht--Fraïssé game of length $n$ between $\A$ and $\B$, denoted by $\EF{\varepsilon}n{\A,\B}$, has two players, $\playerone$ and $\playertwo$, and is played as follows.
        On each round $i<n$, either
        \begin{enumerate}
            \item $\playerone$ plays an element $x_i\in\A$ and $\playertwo$ responds by playing $y_i\in\B$, or
            \item $\playerone$ plays $x_i\in\B$ and $\playertwo$ responds with a $y_i\in\A$.
        \end{enumerate}
        Then $\playertwo$ wins a play
        \[
            (x_0,y_0,\dots,x_{n-1},y_{n-1})
        \]
        if the function $\{(a_i,b_i) \mid i<n\}$ is a partial $\varepsilon$-isomorphism, where
        \[
            a_i =
            \begin{cases}
                x_i & \text{if $x_i\in\A$}, \\
                y_i & \text{otherwise},
            \end{cases}
            \quad\text{and}\quad
            b_i =
            \begin{cases}
                x_i & \text{if $x_i\in\B$}, \\
                y_i & \text{otherwise}.
            \end{cases}
        \]
        \item The Ehrenfeucht--Fraïssé game of length $n$ between $\A$ and $\B$, denoted simply by $\EF{}n{\A,\B}$, is played as follows. On the first round, $\playerone$ chooses $\varepsilon>0$, and then the players proceed to play $\EF{\varepsilon}n{\A,\B}$. The winner of this $\varepsilon$-play wins the play of $\EF{}n{\A,\B}$.
    \end{enumerate}
\end{definition}

A strategy for each player is defined as usual. We write $X\wins G$ if player $X$ has a winning strategy in the game $G$. It is immediately clear that $\playertwo\wins\EF{}n{\A,\B}$ if and only if $\playertwo\wins\EF{\varepsilon}n{\A,\B}$ for all $\varepsilon$.

Note that one could define a version of the game where, in the first round, in addition to choosing $\varepsilon>0$, $\playerone$ also chooses a modulus $\Delta$, and then the winning condition is that the function resulting from the moves of the players is a partial $(\varepsilon,\Delta)$-isomorphism. This could be useful in practice if considering signatures with function symbols to bound the number of formulae that must be considered at once while determining whether $\playertwo$ wins a play. However, as will be obvious in the proof of Theorem~\ref{Main theorem}, for the purposes of this paper this addition is useless for the finite game. In the case of an infinite variant of the game (see Section~\ref{Section: Infinite and Dynamic Games}), it is more useful but not quite enough for the purposes of infinitary logic.

The following lemma captures the essence of the power of our game.

\begin{lemma}\label{Essential lemma}
    Let $\sigma$ be any signature. For any formula $\varphi(v_0,\dots,v_{k-1})$, there is a modulus $\Theta$ such that for any $\sigma$-sturctures $\A$ and $\B$, tuples $\bar a\in\A^k$ and $\bar b\in\B^k$, and $\varepsilon>0$,
    \[
        \playertwo\wins\EF{\varepsilon}{\qr(\varphi)}{(\A,\bar a),(\B,\bar b)} \implies \abs{\varphi^\A(\bar a) - \varphi^\B(\bar b)}\leq\Theta(\varepsilon).
    \]
\end{lemma}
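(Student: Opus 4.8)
The plan is to build the modulus $\Theta$ by recursion on the structure of $\varphi$ and to verify the implication by the same structural induction. Before the induction, I would isolate two elementary facts about the game. First, a \emph{monotonicity} fact: if $\playertwo\wins\EF{\varepsilon}{n}{\A',\B'}$ and $n'\leq n$, then $\playertwo\wins\EF{\varepsilon}{n'}{\A',\B'}$. This holds because any restriction of a partial $\varepsilon$-isomorphism is again a partial $\varepsilon$-isomorphism: $\playertwo$ follows the length-$n$ strategy for the first $n'$ rounds, and completing the play arbitrarily to length $n$ exhibits the length-$n'$ position as a restriction of a partial $\varepsilon$-isomorphism. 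Second, a \emph{subgame} fact: if $\playertwo\wins\EF{\varepsilon}{n}{(\A,\bar a),(\B,\bar b)}$, $\playerone$ opens with some $a^*\in\A$ (resp. $b^*\in\B$) and $\playertwo$ answers by her winning strategy with $b^*$ (resp. $a^*$), then $\playertwo\wins\EF{\varepsilon}{n-1}{(\A,\bar a,a^*),(\B,\bar b,b^*)}$. This is simply because the winning condition of the length-$(n-1)$ game on the one-point expansions is literally the winning condition for the tail of the original length-$n$ game, the freshly played element being recorded as a new constant.

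The base case is $\varphi$ atomic, where $\qr(\varphi)=0$ and I would take $\Theta=\mathrm{id}$. Here $\playertwo\wins\EF{\varepsilon}{0}{(\A,\bar a),(\B,\bar b)}$ says exactly that the empty map is a partial $\varepsilon$-isomorphism; replacing each $v_i$ by the constant $c_i$ turns $\varphi$ into an atomic sentence $\varphi^*$, which is a $\Delta$-formula for $\Delta=\modulus(\varphi^*)$, so the defining inequality of a partial $(\varepsilon,\Delta)$-isomorphism yields $\abs{\varphi^\A(\bar a)-\varphi^\B(\bar b)}=\abs{(\varphi^*)^{(\A,\bar a)}-(\varphi^*)^{(\B,\bar b)}}\leq\varepsilon$. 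For the connective case $\varphi=u(\psi_0,\dots,\psi_{m-1})$, observe that $\qr(\psi_i)\leq\qr(\varphi)$, so by monotonicity $\playertwo$ wins each $\EF{\varepsilon}{\qr(\psi_i)}{(\A,\bar a),(\B,\bar b)}$, and the induction hypothesis supplies moduli $\Theta_i$ with $\abs{\psi_i^\A(\bar a)-\psi_i^\B(\bar b)}\leq\Theta_i(\varepsilon)$. Letting $\Delta_u$ be a genuine modulus of uniform continuity of $u$ and putting $\Theta=\Delta_u\circ\max_{i<m}\Theta_i$—which is again a modulus, since moduli are closed under composition and finite maxima—uniform continuity of $u$ gives $\abs{\varphi^\A(\bar a)-\varphi^\B(\bar b)}\leq\Delta_u(\max_{i<m}\abs{\psi_i^\A(\bar a)-\psi_i^\B(\bar b)})\leq\Theta(\varepsilon)$.

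The quantifier case $\varphi=\inf_{v_k}\psi$ (with $\sup$ symmetric) is where the subgame fact does the work, and I would keep $\Theta=\Theta_\psi$ unchanged. To bound $\varphi^\B(\bar b)-\varphi^\A(\bar a)$, fix $\delta>0$ and choose $a^*\in\A$ with $\psi^\A(\bar a,a^*)\leq\varphi^\A(\bar a)+\delta$; letting $\playerone$ open with $a^*$ and $\playertwo$ answer with $b^*$, the subgame fact and the induction hypothesis for $\psi$ (whose quantifier rank is $\qr(\varphi)-1$) give $\abs{\psi^\A(\bar a,a^*)-\psi^\B(\bar b,b^*)}\leq\Theta_\psi(\varepsilon)$, so $\varphi^\B(\bar b)\leq\psi^\B(\bar b,b^*)\leq\varphi^\A(\bar a)+\delta+\Theta_\psi(\varepsilon)$, and letting $\delta\to0$ removes the $\delta$. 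The reverse inequality is obtained identically with $\playerone$ opening in $\B$, so $\abs{\varphi^\A(\bar a)-\varphi^\B(\bar b)}\leq\Theta_\psi(\varepsilon)$. I expect the only genuine obstacle to be phrasing the subgame fact cleanly enough that the re-indexing of a played element as a new constant is unambiguous; the remaining steps are the standard infimum-approximation bookkeeping and the routine verification that each constructed $\Theta$ is indeed a modulus.
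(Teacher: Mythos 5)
Your proof is correct, and its mathematical core coincides with the paper's: the same quantifier-case argument (let $\playerone$ play a $\delta$-optimal witness for the infimum, apply the induction hypothesis to the one-point expansions, let $\delta\to 0$) and the same connective modulus $\Delta_u\circ\max_{i<m}\Theta_i$. The genuine difference is the induction scheme. The paper inducts on quantifier rank, so at rank $n+1$ it must first assert a normal form $\varphi\equiv u(\hat\psi_0,\dots,\hat\psi_{m-1})$ with each $\hat\psi_i$ a quantified formula whose matrix has lower rank; you instead induct on the structure of $\varphi$, with connectives and quantifiers as separate cases, which removes the need for any rewriting but creates the need for your two preliminary facts: monotonicity in the game length (because the arguments of a connective may have distinct quantifier ranks) and the subgame fact. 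Both facts are true with exactly the justifications you give, and the paper in fact uses both silently: its line ``It follows that $\playertwo\wins\EF{\varepsilon}{n}{(\A,\bar a,a),(\B,\bar b,b)}$'' is precisely your subgame fact, and its application of the induction hypothesis (stated for rank-$n$ formulas) to formulas $\psi_i$ of rank possibly strictly less than $n$ in games of length exactly $n$ tacitly needs monotonicity. So your version trades the paper's unproved normal-form step for two easy, explicitly justified game lemmas; this is a fair trade and, if anything, tightens the argument.
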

\begin{proof}
    We construct $\Theta$ by induction on the quantifier rank of $\varphi$. First notice that $\playertwo\wins\EF{\varepsilon}0{(\A,\bar a),(\B,\bar b)}$ implies $(\A,\bar a)\equiv_0^{\varepsilon}(\B,\bar b)$ by definition, so for atomic $\varphi$, we can choose $\Theta = \id$. If $\varphi$ is quantifier free but not atomic, then it is equivalent to $u(\psi_0(\bar v),\dots,\psi_{m-1}(\bar v))$ for some connective $u$ and atomic $\psi_i$. Then we may choose $\Theta$ to be any modulus respected by $u$, as $(\A,\bar a)\equiv_0^{\varepsilon}(\B,\bar b)$ implies
    \begin{align*}
        \abs{\varphi^\A(\bar a) - \varphi^\B(\bar b)} &= \abs{u(\psi_0^\A(\bar a),\dots,\psi_{m-1}^\A(\bar a)) - u(\psi_0^\B(\bar b),\dots,\psi_{m-1}^\B(\bar b))} \\
        &\leq \Theta\left(\max_{i<m}\abs{\psi_i^\A(\bar a) - \psi_i^\B(\bar b)}\right) \\
        &\leq \Theta(\varepsilon).
    \end{align*}

    Suppose that the claim holds for formulae of quantifier rank $n$, and fix a formula $\varphi(v_0,\dots,v_{k-1})$ of quantifier rank $\leq n+1$. Now $\varphi \equiv u(\hat\psi_0, \dots, \hat\psi_{m-1})$ for some connective $u$, where $\hat\psi_i = Q_i{v_k}\psi_i(v_0,\dots,v_k)$ for $Q_i\in\{\inf,\sup\}$ and each $\psi_i$ has quantifier rank $\leq n$. Let $\Delta$ be a modulus respected by $u$. By the induction hypothesis, there are moduli $\Theta_i$ such that
    \[
        \playertwo\wins\EF{\varepsilon}n{(\A,\bar a,a),(\B,\bar b, b)} \implies \abs{\psi_i^\A(\bar a, a) - \psi_i^\B(\bar b, b)} \leq \Theta_i(\varepsilon)
    \]
    for any $\A$, $\B$, $(\bar a, a)\in\A^{k+1}$, $(\bar b, b)\in\B^{k+1}$ and $\varepsilon > 0$. We declare
    \[
        \Theta(t) \coloneqq \Delta\left(\max_{i<m}\Theta_i(t)\right)
    \]
    for all $t\in[0,\infty)$. Clearly $\Theta$ is a modulus.
    We proceed to show that it satisfies the desired condition. For this, let $\A$, $\B$, $\bar a\in\A^k$, $\bar b\in\B^k$ and $\varepsilon>0$ be fixed, and suppose that $\playertwo\wins\EF{\varepsilon}{n+1}{(\A,\bar a),(\B,\bar b)}$.
    
    Now, for each $i<m$, we show that $|\hat\psi_i^\A(\bar a) - \hat\psi_i^\B(\bar b)| \leq \Theta_i(\varepsilon)$. Fix $i<m$. We assume that $Q_i = \inf$; the case $Q_i = \sup$ is similar. If $\hat\psi_i^\A(\bar a) = \hat\psi_i^\B(\bar b)$, we are done, so suppose this is not the case. By symmetry, we may assume that $\hat\psi_i^\B(\bar b) < \hat\psi_i^\A(\bar a)$. Let $\delta>0$ be arbitrary. Then there is $a\in\A$ such that $\psi_i^\A(\bar a, a) < \hat\psi_i^\A(\bar a) + \delta$. We now play $\EF{\varepsilon}{n+1}{(\A,\bar a),(\B,\bar b)}$. We let $\playerone$ play $a$ on the first round. Then the winning strategy of $\playertwo$ produces an element $b\in\B$. It follows that
    \[
        \playertwo\wins\EF{\varepsilon}n{(\A,\bar a,a),(\B,\bar b,b)}.
    \]
    By the induction hypothesis, we then have
    \[
        \abs{\psi_i^\A(\bar a,a) - \psi_i^\B(\bar b, b)} \leq \Theta_i(\varepsilon).
    \]
    Then
    \[
        \hat\psi_i^\B(\bar b) \leq \psi_i^\B(\bar b, b) \leq \psi_i^\A(\bar a, a) + \Theta_i(\varepsilon) < \hat\psi_i^\A(\bar a) + \Theta_i(\varepsilon) + \delta.
    \]
    As $\delta$ was arbitrary, this means that $|\hat\psi_i^\A(\bar a) - \hat\psi_i^\B(\bar b)| \leq \Theta_i(\varepsilon)$.

    Now,
    \begin{align*}
        \abs{\varphi^\A(\bar a) - \varphi^\B(\bar b)} &= \abs{u(\hat\psi_0^\A(\bar a), \dots, \hat\psi_{m-1}^\A(\bar a)) - u(\hat\psi_0^\B(\bar b), \dots, \hat\psi_{m-1}^\B(\bar b))} \\
        &\leq \Delta\left( \max_{i<m}\abs{\hat\psi_i^\A(\bar a) - \hat\psi_i^\B(\bar b)} \right) \\
        &\leq \Delta\left( \max_{i<m}\Theta_i(\varepsilon) \right) \\
        &= \Theta(\varepsilon).
    \end{align*}
    This concludes the proof.
\end{proof}

\begin{theorem}\label{Main theorem}
    Given $n<\omega$, consider the following conditions.
    \begin{enumerate}
        \item $\playertwo\wins\EF{}n{\A\restriction_{\sigma'},\B\restriction_{\sigma'}}$ for all  finite $\sigma'\subseteq\sigma$. \label{II wins}
        \item $\A\equiv_n\B$. \label{Elem equiv}
    \end{enumerate}
    We have \ref{II wins}$\implies$\ref{Elem equiv}, and if $\sigma$ is relational, then also \ref{Elem equiv}$\implies$\ref{II wins}.
\end{theorem}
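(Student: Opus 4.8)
\emph{The two implications have quite different flavours, so I would prove them separately.}

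For \ref{II wins}$\implies$\ref{Elem equiv} the work is already done by Lemma~\ref{Essential lemma}. Fix a $\sigma$-sentence $\varphi$ with $\qr(\varphi)\leq n$ and let $\sigma'\subseteq\sigma$ be the finite set of symbols occurring in $\varphi$, so that $\varphi^\A=\varphi^{\A\restriction_{\sigma'}}$ and likewise for $\B$. I would first record the evident monotonicity of the game: a winning strategy for $\playertwo$ in $\EF{\varepsilon}{n}{\cdot}$ restricts to one in $\EF{\varepsilon}{m}{\cdot}$ for every $m\leq n$, since any restriction of a partial $\varepsilon$-isomorphism is again a partial $\varepsilon$-isomorphism. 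Hence the hypothesis gives $\playertwo\wins\EF{\varepsilon}{\qr(\varphi)}{\A\restriction_{\sigma'},\B\restriction_{\sigma'}}$ for every $\varepsilon>0$, and Lemma~\ref{Essential lemma} then yields a single modulus $\Theta$ with $\abs{\varphi^\A-\varphi^\B}\leq\Theta(\varepsilon)$ for all $\varepsilon>0$. Letting $\varepsilon\to 0$ and using that $\Theta$ is continuous with $\Theta(0)=0$ forces $\varphi^\A=\varphi^\B$. As $\varphi$ was arbitrary, $\A\equiv_n\B$.

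For \ref{Elem equiv}$\implies$\ref{II wins} with $\sigma$ relational, fix a finite $\sigma'\subseteq\sigma$; it is finite relational, and every $\sigma'$-sentence is a $\sigma$-sentence, so $\A\restriction_{\sigma'}\equiv_n\B\restriction_{\sigma'}$. Thus I may assume $\sigma$ itself is finite relational and prove $\playertwo\wins\EF{\varepsilon}{n}{\A,\B}$ for each fixed $\varepsilon>0$. The plan is a back-and-forth construction in which $\playertwo$ maintains, after round $i$ (with chosen tuples $\bar a,\bar b$ of length $i$), the invariant
\[
    (\A,\bar a)\equiv_{n-i}^{\delta_i,\Delta}(\B,\bar b),
\]
where $\delta_i=i\varepsilon/n$ and $\Delta$ is one fixed modulus with $\Delta\geq\id$ that also dominates the finitely many moduli of atomic formulae in $n$ variables (a finite maximum of moduli is again a modulus). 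At $i=0$ this invariant is a consequence of $\A\equiv_n\B$; at $i=n$ it says that for every atomic $\Delta$-formula—hence, by the choice of $\Delta$, for every atomic formula—the two sides differ by at most $\delta_n=\varepsilon$, which is exactly the statement that the resulting map is a partial $\varepsilon$-isomorphism.

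The crux is the one-step preservation of the invariant. Suppose it holds after round $i<n$ and, say, $\playerone$ plays $a\in\A$ (the case where $\playerone$ plays in $\B$ being symmetric). I want $b\in\B$ with $\abs{\psi^\A(\bar a,a)-\psi^\B(\bar b,b)}\leq\delta_{i+1}$ for every $\Delta$-formula $\psi(v_0,\dots,v_i)$ of quantifier rank $\leq n-i-1$. By Theorem~\ref{Totally bounded formulae} the space $X_{\sigma,\Delta,n-i-1,i+1}$ is totally bounded, so it suffices to match finitely many representatives $\psi_0,\dots,\psi_{p-1}$ up to a small error $\rho$ chosen with $2\rho<\delta_{i+1}-\delta_i$. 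Writing $r_j=\psi_j^\A(\bar a,a)$, I would test the candidate $b$ against the single aggregate formula
\[
    \Phi=\inf_{v_i}\max_{j<p}\abs{\psi_j(\bar v,v_i)-r_j},
\]
whose quantifier rank is $\leq n-i$ and whose value in $(\A,\bar a)$ is $0$, witnessed by $a$. Applying the stage-$i$ invariant to $\Phi$ gives $\Phi^\B(\bar b)\leq\delta_i$, so some $b\in\B$ makes $\max_{j<p}\abs{\psi_j^\B(\bar b,b)-r_j}$ below $\delta_i$ plus arbitrarily small slack, and $\rho$-density then closes the estimate at $\delta_{i+1}$.

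The genuinely continuous-logic difficulty, and the step I expect to require the most care, is verifying that $\Phi$ really is a $\Delta$-sentence, so that the invariant may be applied to it. The outer connective $\max_{j<p}\abs{t_j-r_j}$ is $1$-Lipschitz, but its arguments $\psi_j$ need not be atomic or quantified as demanded by clause (iii) of the definition of $\Delta$-formula. I would resolve this exactly as in the proof of Theorem~\ref{Totally bounded formulae}: absorb the topmost connective of each $\psi_j$ into the outer connective by composing consecutive connectives, leaving only atomic or quantified $\Delta$-formulae as arguments. Since the outer connective is $1$-Lipschitz and $\Delta\geq\id$—so that both $1$-Lipschitz and $\Delta$-respecting connectives respect $\Delta$—the composite connective still respects $\Delta$, and reinstating the leading $\inf_{v_i}$ produces a $\Delta$-formula of quantifier rank $\leq n-i$. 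This interplay between connectives, quantifier rank and the modulus $\Delta$ is the only delicate point; the remainder is routine back-and-forth bookkeeping together with the elementary fact that $\delta_i=i\varepsilon/n$ runs from $0$ to $\varepsilon$.
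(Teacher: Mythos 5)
Your proposal is correct and follows essentially the same route as the paper: the direction \ref{II wins}$\implies$\ref{Elem equiv} via Lemma~\ref{Essential lemma} (you argue directly by letting $\varepsilon\to 0$ where the paper argues by contradiction), and the direction \ref{Elem equiv}$\implies$\ref{II wins} via a back-and-forth strategy maintaining $\equiv^{\delta_i,\Delta}_{n-i}$ with a linearly growing error budget, using Theorem~\ref{Totally bounded formulae} and the aggregate formula $\inf_{v_i}\max_j\abs{\psi_j - r_j}$. The only differences are cosmetic bookkeeping ($i\varepsilon/n$ versus $(i+1)\varepsilon/(n+1)$) and your explicit normalization $\Delta\geq\id$ with the connective-composition argument, which makes rigorous the step the paper compresses into the parenthetical ``(equivalent to)'' when asserting that $\inf_{v_i}\psi$ is a $\Delta$-formula.
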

\begin{proof}
    \ref{II wins}$\implies$\ref{Elem equiv}:
    Let $\sigma$ be a signature and $\A$ and $\B$ $\sigma$-structures, and suppose that $\playertwo\wins\EF{}n{\A\restriction_{\sigma'},\B\restriction_{\sigma'}}$ for all  finite $\sigma'\subseteq\sigma$. Then $\playertwo\wins\EF{\varepsilon}n{\A\restriction_{\sigma'},\B\restriction_{\sigma'}}$ for all $\varepsilon>0$ and finite $\sigma'\subseteq\sigma$. Suppose for a contradiction that $\A\not\equiv_n\B$. Then there is a sentence $\varphi$ of quantifier rank $\leq n$ and $\delta>0$ such that $\abs{\varphi^\A - \varphi^\B}\geq\delta$. Let $\sigma'$ be the set of symbols from $\sigma$ that occur in $\varphi$. Now by Lemma~\ref{Essential lemma}, there is a modulus $\Theta$ such that $\playertwo\wins\EF{\varepsilon}n{\A\restriction_{\sigma'},\B\restriction_{\sigma'}}$ implies $\abs{\varphi^\A - \varphi^\B}\leq\Theta(\varepsilon)$ for all $\varepsilon>0$. Let $\varepsilon$ be small enough that $\Theta(\varepsilon) < \delta$. Then, as $\playertwo\wins\EF{\varepsilon}n{\A\restriction_{\sigma'},\B\restriction_{\sigma'}}$, we have
    \begin{align*}
        \delta \leq \abs{\varphi^\A - \varphi^\B} &\leq \Theta(\varepsilon) < \delta, 
    \end{align*}
    a contradiction.

    \ref{Elem equiv}$\implies$\ref{II wins}:
    Let $\sigma$ be a relational signature and $\A$ and $\B$ $\sigma$-structures. Suppose that $\A\equiv_n\B$. Fix a finite $\sigma'\subseteq\sigma$ and a number $\varepsilon>0$. Now, as $\sigma'$ is finite and relational, there are only finitely many atomic formulae with variables among $v_0,\dots,v_{n-1}$. Hence there is a modulus $\Delta$ such that every atomic $\sigma'$-formula in variables $v_0,\dots,v_{n-1}$ is equivalent to a $\Delta$-formula. We define a strategy of $\playertwo$ in $\EF{\varepsilon}n{\A,\B}$ as follows. $\playertwo$ maintains the following condition: if the position is $(x_0,y_0,\dots,x_{i-i},y_{i-1})$, then
    \[
        (\A\restriction_{\sigma'},a_0,\dots,a_{i-1}) \equiv^{(i+1)\varepsilon/(n+1),\Delta}_{n-i} (\B\restriction_{\sigma'},b_0,\dots,b_{i-1}).
    \]
    We show that it is always possible for $\playertwo$ to do so. In the beginning, before any rounds have been played, we have $\A\restriction_{\sigma'}\equiv_n\B\restriction_{\sigma'}$ by assumption, whence $\A\restriction_{\sigma'}\equiv^{\varepsilon/(n+1),\Delta}_n\B\restriction_{\sigma'}$. Suppose that $\playertwo$ has been able to maintain the condition until the position is $(x_0,y_0,\dots,x_{i-1},y_{i-1})$. Suppose that $\playerone$ plays $a_i\in\A$ (the case where $\playerone$ plays in $\B$ is symmetric). By Lemma~\ref{Totally bounded formulae}, the space of $\Delta$-formulae of quantifier rank $\leq n-(i+1)$ with free variables among $v_0,\dots,v_i$ is totally bounded, so it has a finite $\varepsilon/3(n+1)$-cover $\{U_j \mid j\in J\}$. For each $j\in J$, let $\varphi_j(v_0,\dots,v_i)\in U_j$ and let $r_j = \varphi_j^\A(a_0,\dots,a_i)$. Then let $\psi(v_0,\dots,v_i) = \max_{j\in J}\abs{\varphi_j - r_j}$. Now $\inf_{v_i}\psi$ is (equivalent to) a $\Delta$-formula of quantifier rank $\leq n-i$, so as, by the induction hypothesis, we have
    \[
        (\A\restriction_{\sigma'},a_0,\dots,a_{i-1}) \equiv^{(i+1)\varepsilon/(n+1),\Delta}_{n-i} (\B\restriction_{\sigma'},b_0,\dots,b_{i-1}),
    \]
    we obtain
    \[
        \abs{ \inf_a\psi^\A(a_0,\dots,a_{i-1},a) - \inf_b\psi^\B(b_0,\dots,b_{i-1},b) } \leq\frac{(i+1)\varepsilon}{n+1},
    \]
    whence
    \begin{align*}
        \inf_b\psi^\B(b_0,\dots,b_{i-1},b) &\leq \inf_a\psi^\A(a_0,\dots,a_{i-1},a) + \frac{(i+1)\varepsilon}{n+1} \\
        &\leq \psi^\A(a_0,\dots,a_i) + \frac{(i+1)\varepsilon}{n+1} \\
        &= 0 + \frac{(i+1)\varepsilon}{n+1} \\
        &< \frac{(i+1)\varepsilon}{n+1} + \frac{\varepsilon}{3(n+1)}.
    \end{align*}
    Thus there is $b_i\in\B$ with
    \[
        \psi^\B(b_0,\dots,b_i) < \frac{(i+1)\varepsilon}{n+1} + \frac{\varepsilon}{3(n+1)}.
    \]
    Hence for all $j\in J$,
    \[
        \abs{\varphi_j^\B(b_0,\dots,b_i) - r_j} \leq \frac{(i+1)\varepsilon}{n+1} + \frac{\varepsilon}{3(n+1)}.
    \]
    Now, if $\varphi(v_0,\dots,v_i)$ is a $\Delta$-formula of quantifier rank $\leq n-(i+1)$, then for some $j\in J$, $\varphi\in U_j$ and thus $d(\varphi,\varphi_j) < \varepsilon/3(n+1)$. Then, denoting $\bar a = (a_0,\dots,a_i)$ and $\bar b = (b_0,\dots,b_i)$,
    \begin{align*}
        \abs{\varphi^\A(\bar a) - \varphi^\B(\bar b)} &\leq \abs{\varphi^\A(\bar a) - \varphi_j^\A(\bar a)} + \abs{\varphi_j^\A(\bar a) - \varphi_j^\B(\bar b)} + \abs{\varphi_j^\B(\bar b) - \varphi^\B(\bar b)} \\
        &\leq \frac{\varepsilon}{3(n+1)} + \left(\frac{(i+1)\varepsilon}{n+1} + \frac{\varepsilon}{3(n+1)}\right) + \frac{\varepsilon}{3(n+1)} \\
        &= \frac{(i+1)\varepsilon}{n+1} + \frac{\varepsilon}{n+1} \\
        &= \frac{(i+2)\varepsilon}{n+1}.
    \end{align*}
    Hence
    \[
        (\A\restriction_{\sigma'},a_0,\dots,a_{i-1},a) \equiv^{(i+2)\varepsilon/(n+1),\Delta}_{n-(i+1)} (\B\restriction_{\sigma'},b_0,\dots,b_{i-1}, b).
    \]
    Thus $\playertwo$ can maintain the condition.

    Clearly this strategy is winning, as at the end of a play the condition that $\playertwo$ maintains gives
    \[
        (\A\restriction_{\sigma'},a_0,\dots,a_{n-1}) \equiv^{\varepsilon,\Delta}_0 (\B\restriction_{\sigma'},b_0,\dots,b_{n-1}),
    \]
    and since every atomic $\sigma'$-formula is a $\Delta$-formula, this is the same as
    \[
        (\A\restriction_{\sigma'},a_0,\dots,a_{n-1}) \equiv^{\varepsilon}_0 (\B\restriction_{\sigma'},b_0,\dots,b_{n-1}),
    \]
    which is the winning condition of the game.
\end{proof}

\begin{remark}
    There is an infinite $\sigma$ and $\sigma$-structures $\A$ and $\B$ such that $\A\equiv\B$ but $\playerone\wins\EF{}1{\A,\B}$, making the restriction to finite pieces of the signature necessary. First of all, this is true classically: let $\sigma = \{P_n \mid n<\omega\}$, where each $P_n$ is a unary predicate, let $\A$ be a structure such that $P_n^\A\supseteq P_{n+1}^\A$, $|P_n^\A\setminus P_{n+1}^\A|=\aleph_0$ and $\bigcap_{n<\omega}P_n^\A$ is a singleton, and let $\B$ be the same structure as $\A$ but with the element of $\bigcap_{n<\omega}P_n^\A$ removed. Now $\playerone$ wins the classical EF game between $\A$ and $\B$ in one move by playing $a_0 \in\bigcap_{n<\omega}P_n^\A$; now $a_0\in P_n^\A$ for all $n$ but no response $b_0\in\B$ by $\playertwo$ will satisfy $b_0\in P_n^\B$ for every $n$. However, $\playertwo$ wins the game of length $n$ between the reducts of $\A$ and $\B$ to any finite $\sigma'\subseteq\sigma$ because she will only need to make sure that $y_i$ is in the same fixed finitely many relations as $x_i$. Finally, this example translates to the continuous setting by considering the structures as discrete metric structures.
\end{remark}

\section{Applications of the EF games}

Compact structures in metric model theory correspond to finite structures in classical model theory. If $\A$ is a finite classical structure and $\B$ is elementarily equivalent to $\A$, then they have the same cardinality. This translates to the metric world in the following sense: if $\varepsilon>0$ and $n$ is the least number such that $\A$ can be covered by sets of diameter $\leq\varepsilon$, then this is true also for any $\B\models\Th(\A)$ (hence making also $\B$ compact). This is easy enough to see and we formulate it as the following lemma.

\begin{lemma}\label{The theory of a compact structure enforces compactness}
    Let $\A$ be a compact structure and $\B\models\Th(\A)$. Then
    \begin{enumerate}
        \item also $\B$ is compact and
        \item for any $\varepsilon>0$, if $n<\omega$ is the least such that $\A$ can be covered by $n$ sets of diameter $<\varepsilon$, then $n$ is also the least such number for $\B$.
    \end{enumerate}
\end{lemma}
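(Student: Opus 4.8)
The plan is to reduce both parts to a single statement about \emph{coverability}. For a metric space $X$, an integer $m<\omega$ and $\delta\geq 0$, call $X$ \emph{$(m,\delta)$-coverable} if it is a union of $m$ sets of diameter $\leq\delta$; equivalently, there is a colouring $c\colon X\to\{0,\dots,m-1\}$ with $d(x,y)\leq\delta$ whenever $c(x)=c(y)$. Since a set has diameter $<\varepsilon$ exactly when it has diameter $\leq\delta$ for some $\delta<\varepsilon$, and a cover by finitely many such sets has a common such $\delta=\max_i\operatorname{diam}(S_i)<\varepsilon$, the integer $n$ in the statement is the least $m$ for which $\A$ is $(m,\delta)$-coverable for some $\delta<\varepsilon$. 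As $\A$ is compact it is totally bounded, so this $n$ is finite. Both parts will follow at once from the single claim that, for every $m$ and every $\varepsilon>0$, $\A$ is coverable by $m$ sets of diameter $<\varepsilon$ if and only if $\B$ is: this equates the least $n$ for the two structures (part (2)) and, applied to the finitely many sets witnessing total boundedness of $\A$, makes $\B$ totally bounded, hence (being complete) compact (part (1)).

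The one point that forces the argument to be indirect is that the $(m,\delta)$-coverability number is \emph{not} the value of any single $\CFO$-sentence, in contrast with the ball-covering radius $\inf_{v_0,\dots,v_{m-1}}\sup_{v_m}\min_{i<m}d(v_m,v_i)$, which is. Indeed $(m,\delta)$-coverability is exactly the condition that the graph $G$ on $X$ with an edge between $x,y$ precisely when $d(x,y)>\delta$ has chromatic number $\leq m$, and the obstructions to this (already odd cycles, for $m=2$) can involve arbitrarily many points, so they are invisible at any fixed quantifier rank. I would circumvent this with a compactness-of-colourings argument: first prove that $X$ is $(m,\delta)$-coverable if and only if \emph{every finite subset} of $X$ is. This is immediate from Tychonoff's theorem applied to $\{0,\dots,m-1\}^X$, each constraint ``$c(x)\neq c(y)$'' for $d(x,y)>\delta$ being clopen, so a colouring exists as soon as all finite sub-families of constraints are jointly satisfiable.

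Once coverability is localized to finite subsets, the transfer between $\A$ and $\B$ is elementary and uses only $\A\equiv\B$. For a finite tuple $\bar b=(b_0,\dots,b_{N-1})$ with distance matrix $r_{ij}=d(b_i,b_j)$, the sentence $\inf_{v_0,\dots,v_{N-1}}\max_{i<j}\abs{d(v_i,v_j)-r_{ij}}$ has value $0$ (witnessed by $\bar b$), hence value $0$ in the other structure, which therefore contains tuples whose distance matrix is within any prescribed $\varepsilon'>0$ of $(r_{ij})$. To prove the claim it suffices, by symmetry, to show that if $\B$ is not coverable by $m$ sets of diameter $<\varepsilon$, then neither is $\A$. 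Fix $\delta<\varepsilon$; then $\B$ is not $(m,\delta)$-coverable, so by the finite-witness lemma some finite $F\subseteq\B$ is not $(m,\delta)$-coverable. Transferring $F$ into $\A$ with perturbation $\varepsilon'$, every pair of the image with $d>\delta$ in $\B$ still has $d>\delta-\varepsilon'$ in $\A$, so the image graph contains that of $F$ and hence is not $(m,\delta-\varepsilon')$-coverable. Letting $\delta$ range over $(0,\varepsilon)$ and $\varepsilon'$ over $(0,\delta)$, the thresholds $\delta-\varepsilon'$ sweep out all of $(0,\varepsilon)$, so $\A$ is not $(m,\delta')$-coverable for any $\delta'<\varepsilon$, i.e.\ not coverable by $m$ sets of diameter $<\varepsilon$.

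I expect the main obstacle to be twofold. Conceptually, the real engine is the finite-witness lemma: recognizing that, although the diameter-covering number is not individually definable, each instance of coverability is determined by finite subconfigurations, which \emph{are} controlled by the theory. Technically, the delicate part is the $\varepsilon$-bookkeeping in the transfer step, namely keeping the strict and non-strict thresholds aligned so that a genuine (rather than borderline) obstruction survives an $\varepsilon'$-perturbation of the distance matrix; concretely one must choose $\delta$ and $\varepsilon'$ relative to the diameter $\delta''<\varepsilon$ of a hypothetical cover, which is why the quantifiers over $\delta$ and $\varepsilon'$ must be handled in the right order.
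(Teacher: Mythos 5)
Your proof is correct, but it takes a genuinely different route from the paper's, and the comparison is instructive. The paper's proof is a two-line transfer of a single sentence: if $\A$ is covered by $n$ sets of diameter $<\varepsilon$, then the value of
\[
    \inf_{x_0}\dots\inf_{x_{n-1}}\sup_{y}\min_{i<n}d(y,x_i)
\]
in $\A$ is $<\varepsilon$, hence so is its value in $\B$, giving points $b_0,\dots,b_{n-1}$ such that every $b\in\B$ lies within distance $<\varepsilon$ of some $b_i$. As you anticipated in your second paragraph, this sentence measures the \emph{ball-covering radius}, so what it produces in $\B$ is a cover by $n$ balls of radius $<\varepsilon$, i.e.\ by sets of diameter possibly up to $2\varepsilon$. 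That slippage is harmless for part (1) (total boundedness plus completeness gives compactness) and for the only later use of the lemma (Theorem~\ref{The theory of a compact structure is categorical} needs just compactness of $\B$), but the ball-covering and diameter-covering numbers are genuinely different quantities: for three points at mutual distances $\tfrac{1}{2},\tfrac{1}{2},1$ and $\varepsilon=1$, one ball of radius $\tfrac{1}{2}$ suffices, yet two sets of diameter $<1$ are needed. So the paper's argument, read literally against the statement, establishes part (2) only in its ball-covering form, whereas your heavier machinery is exactly what the diameter form requires: the De Bruijn--Erd\H{o}s/Tychonoff reduction of $(m,\delta)$-coverability to finite subsets, followed by transfer of finite distance matrices via the sentences $\inf_{\bar v}\max_{i<j}\abs{d(v_i,v_j)-r_{ij}}$ --- of unbounded quantifier rank, which is fine since the hypothesis is full elementary equivalence --- with threshold bookkeeping that is sound because non-coverability is inherited downward in the threshold. (Your claim that coverability is not the value of any single $\CFO$-sentence is only motivation and is not needed for correctness, so its informal justification is not a gap.) In short: the paper's approach buys brevity and suffices for everything the lemma is later used for; yours is longer but proves part (2) exactly as stated.
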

\begin{proof}
    We show that if $\A$ can be covered by $n$-many sets of diameter $<\varepsilon$, then so can $\B$. This shows both the compactness of $\B$ and that the number needed to cover $\B$ with sets of diameter $<\varepsilon$ is at most the same as for $\A$. So let $n$ be such. Then the value of the sentence
    \[
        \inf_{x_0}\dots\inf_{x_{n-1}}\sup_{y}\min_{i<n}d(y,x_i)
    \]
    in $\A$ is some number strictly smaller than $\varepsilon$, whence this number is also the value of the sentence in $\B$.
    Hence there are $b_0,\dots,b_{n-1}\in\B$ such that for all $b\in\B$, one of the numbers $d(b,b_i)$ is strictly smaller than $\varepsilon$.

    Since also $\A\models\Th(\B)$, by a symmetric argument if $\B$ can be covered by $n$-many sets of diameter $<\varepsilon$, then so can $\A$.
\end{proof}

Even more is true classically: the first-order theory of a finite structure is categorical, i.e. it has a unique model up to isomorphism. This is because a finite structure has only finitely many elements, relations and functions, so one can cook up a sentence that exactly describes each of these.\footnote{Of course, if the signature is not finite, one needs infinitely many sentences in order to describe which symbol gets which interpretation.} The same is true for the continuous first-order theory of a compact structure, which we can prove using our EF games.

This is not a new result, per se: if $\A$ and $\B$ are elementarily equivalent compact structures, then by the $\CFO$ analogue of the Keisler--Shelah theorem, there is a cardinal $\kappa$ and an ultrafilter $U$ on $\kappa$ such that $\A^\kappa/U\cong\B^\kappa/U$, but as $\A$ and $\B$ are compact, we have $\A\cong\A^\kappa/U$ and $\B\cong\B^\kappa/U$. However, we feel that the EF game is much less of a sledgehammer than the Keisler--Shelah theorem and our proof may give better insight as to why the theorem is true.

\begin{lemma}\label{We can assume that the signature is countable}\quad
    \begin{enumerate}
        \item Suppose $\A$ and $\B$ are compact $\sigma$-structures. Then there is a countable $\sigma'\subseteq\sigma$ such that $\A\cong\B$ if and only if $\A\restriction_{\sigma'}\cong\B\restriction_{\sigma'}$.
        \item Suppose $\A$ and $\B$ are $\sigma$-structures. Then there is a relational $\sigma'$ with $|\sigma'|=|\sigma|$ and $\sigma'$-structures $\A'$ and $\B'$ such that $\A\cong\B$ if and only if $\A'\cong\B'$. Furthermore, $\A'\restriction_{\emptyset} = \A\restriction_{\emptyset}$ and $\B'\restriction_{\emptyset} = \B\restriction_{\emptyset}$.
    \end{enumerate}
\end{lemma}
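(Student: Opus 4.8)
The two parts are essentially independent, and in each the forward implication of the biconditional is trivial: a full isomorphism restricts to an isomorphism of any reduct in (i), and induces an isomorphism of the relationalization in (ii). So the content in each case is the reverse implication, and my plan differs for the two parts.

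For (i) the plan is to view candidate isomorphisms as points of a compact space and ``respecting a symbol'' as a closed condition, so that failure of isomorphism becomes an empty-intersection statement amenable to compactness. Concretely, let $K$ be the set of surjective isometries of $\dom(\A)$ onto $\dom(\B)$, regarded as a subset of $C(\dom(\A),\dom(\B))$ with the sup-metric. Since $\A$ and $\B$ are compact, every isometry is $1$-Lipschitz, so $K$ is equicontinuous and has image in a compact space; by Arzelà--Ascoli (the version quoted in the proof of Theorem~\ref{Totally bounded formulae}) it is totally bounded, and being closed it is compact, possibly empty. For each $s\in\sigma$ let $G_s\subseteq K$ be the set of isometries respecting $s$ (i.e. $g(c^\A)=c^\B$, $g(f^\A(\bar a))=f^\B(g(\bar a))$, or $P^\A(\bar a)=P^\B(g(\bar a))$ according to the type of $s$); since all interpretations are uniformly continuous and evaluation is continuous on $K$, each $G_s$ is closed. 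By definition $\A\cong\B$ iff $\bigcap_{s\in\sigma}G_s\neq\emptyset$, and $\A\restriction_{\sigma'}\cong\B\restriction_{\sigma'}$ iff $\bigcap_{s\in\sigma'}G_s\neq\emptyset$ for every $\sigma'\subseteq\sigma$.

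I then split into cases. If $\bigcap_{s\in\sigma}G_s\neq\emptyset$, i.e. $\A\cong\B$, I take $\sigma'=\emptyset$; the reduct side then holds for any $\sigma'$, so both sides of the biconditional are true. If $\bigcap_{s\in\sigma}G_s=\emptyset$, then compactness of $K$ and the finite-intersection property give a finite $F\subseteq\sigma$ with $\bigcap_{s\in F}G_s=\emptyset$ (taking $F=\emptyset$ when $K=\emptyset$), and setting $\sigma'=F$ makes both sides false. In either case a finite, hence countable, $\sigma'$ witnesses the biconditional. The only real obstacle here is verifying that $K$ is compact and each $G_s$ closed; both rest on uniform continuity of the interpretations together with Arzelà--Ascoli.

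For (ii) the plan is the standard relationalization, adjusted so the new predicates remain $[0,1]$-valued and obey a modulus. Keep the metric and every predicate symbol of $\sigma$ with its interpretation; replace each $n$-ary function symbol $f$ by an $(n{+}1)$-ary predicate $R_f$ with $R_f^{\A'}(\bar a,b)=d(f^\A(\bar a),b)$ (and $R_f^{\B'}$ symmetric), and each constant $c$ by a unary predicate $P_c$ with $P_c^{\A'}(x)=d(x,c^\A)$, $P_c^{\B'}(x)=d(x,c^\B)$. Because $D=1$ these take values in $[0,1]$, and the triangle inequality shows $R_f$ obeys the modulus $t\mapsto\modulus(f)(t)+t$ while $P_c$ is $1$-Lipschitz, so $\A'$ and $\B'$ are genuine $\sigma'$-structures; the map sending each function symbol to its $R_f$, each constant to its $P_c$, and each predicate to itself is a bijection $\sigma\to\sigma'$, giving $|\sigma'|=|\sigma|$, and the domains are untouched, so $\A'\restriction_{\emptyset}=\A\restriction_{\emptyset}$ and $\B'\restriction_{\emptyset}=\B\restriction_{\emptyset}$. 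It remains to see that a surjective isometry $h$ respects $f$ (resp. $c$) exactly when it respects $R_f$ (resp. $P_c$): the ``only if'' is a direct computation using that $h$ is an isometry, and the ``if'' follows since $f^\A(\bar a)$ is the unique point with $R_f^{\A'}(\bar a,\cdot)=0$ and $c^\A$ the unique point with $P_c^{\A'}=0$, so $R_f^{\B'}(h(\bar a),h(f^\A(\bar a)))=0$ forces $h(f^\A(\bar a))=f^\B(h(\bar a))$, and likewise for constants. Thus $\A\cong\B$ iff $\A'\cong\B'$. This part is routine; the only point requiring care is the modulus and $[0,1]$-bound bookkeeping for the new predicates.
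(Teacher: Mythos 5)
Both parts of your proposal are correct, but they relate to the paper differently. Part (ii) is essentially the paper's own construction: the paper likewise replaces each $n$-ary function symbol $F$ by an $(n{+}1)$-ary predicate interpreted as $d(a_n,F^\A(a_0,\dots,a_{n-1}))$; the only (inessential) difference is that the paper keeps constant symbols, which its notion of ``relational'' permits, while you also code each constant $c$ as the $1$-Lipschitz predicate $x\mapsto d(x,c^\A)$ — your modulus and bound bookkeeping is fine and the uniqueness-of-zero argument matches the paper's. Part (i), however, takes a genuinely different route. The paper fixes a single countable $\sigma'$ that works uniformly: by separability of compact $\A$ and $\B$ and a Stone--Weierstrass argument, the spaces of interpretations of the symbols of $\sigma$ (with the sup-metric) are separable, so one can choose countably many symbols whose interpretations are dense in both structures; an isomorphism of the $\sigma'$-reducts then automatically respects every symbol of $\sigma$, each interpretation being a uniform limit of $\sigma'$-interpretations. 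You instead topologize the set $K$ of surjective isometries $\dom(\A)\to\dom(\B)$, get compactness of $K$ from Arzel\`a--Ascoli together with closedness in $C(\dom(\A),\dom(\B))$ (the one point needing verification — that uniform limits of surjective isometries are surjective isometries — does hold, using compactness of the domains), express isomorphism of any reduct as nonemptiness of the corresponding intersection of the closed sets $G_s$, and then case-split: if $\A\cong\B$ take $\sigma'=\emptyset$, and otherwise the finite intersection property yields a finite $F\subseteq\sigma$ with $\bigcap_{s\in F}G_s=\emptyset$, making both sides of the biconditional false. This is valid and in fact proves something stronger than the stated lemma: a \emph{finite} $\sigma'$ always suffices, i.e.\ two compact structures are isomorphic as soon as all of their finite reducts are — the exact continuous analogue of the classical finite-structure argument, with compactness of $K$ playing the role of finiteness of the set of bijections. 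What you give up relative to the paper is uniformity: your $\sigma'$ is chosen after deciding whether $\A\cong\B$, and in the isomorphic case the biconditional is witnessed vacuously, whereas the paper's countable $\sigma'$ has the extra property that the $\sigma'$-reduct determines the interpretations of all symbols of $\sigma$. For the lemma as stated, and for its use in Theorem~\ref{The theory of a compact structure is categorical}, either version suffices.
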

\begin{proof}
    \newcommand{\Fun}{\mathop{\mathrm{Fun}}}
    \newcommand{\Rel}{\mathop{\mathrm{Rel}}}
    \newcommand{\Con}{\mathop{\mathrm{Con}}}
    \begin{enumerate}
        \item For a signature $\sigma$ and structure $\A$, denote
        \begin{align*}
            \Fun(\sigma,\A) &= \{F^\A \mid \text{$F\in\sigma$ is a function symbol}\}, \\
            \Rel(\sigma,\A) &= \{P^\A \mid \text{$P\in\sigma$ is a predicate symbol}\} \text{ and} \\
            \Con(\sigma,\A) &= \{c^\A \mid \text{$c\in\sigma$ is a constant symbol} \}.
        \end{align*}
        The first two are metric spaces when equipped with the $\sup$-metric, and the third is a subspace of $\A$.
    
        First of all, suppose that $\sigma'\subseteq\sigma$ is such that the sets $\Fun(\sigma',\A)$, $\Rel(\sigma',\A)$ and $\Con(\sigma',\A)$ are dense in $\Fun(\sigma,\A)$, $\Rel(\sigma,\A)$ and $\Con(\sigma,\A)$, respectively, and the same holds for $\B$. Now $\A\restriction_{\sigma'}\cong\B\restriction_{\sigma'}$ implies $\A\cong\B$: as the interpretations of symbols of $\sigma'$ are dense in the interpretations of symbols of $\sigma$ in both structures, this means that the interpretation of every symbol of $\sigma$ is definable in both $\A\restriction_{\sigma'}$ and $\B\restriction_{\sigma'}$ and hence preserved by isomorphisms between the two.
    
        Then suppose $\A$ and $\B$ are compact. Then they are both separable, and hence so are the spaces $\Con(\sigma,\A)$ and $\Con(\sigma,\B)$. A standard Stone--Weierstrass argument can be applied to show that also the spaces $\Fun(\sigma,\A)$, $\Fun(\sigma,\B)$, $\Rel(\sigma,\A)$ and $\Rel(\sigma,\B)$ must be separable. Hence we can find a countable $\sigma'$ such that the interpretations of $\sigma'$ are dense in the set of interpretations of symbols of $\sigma$ in both structures.

        \item Let $\sigma'$ contain the constant and predicate symbols of $\sigma$ and for every $n$-ary function symbol $F\in\sigma$, we add an $n+1$-ary predicate symbol $P_F\in\sigma'$. Now we let $\A'$ and $\B'$ interpret constant and relation symbols as $\A$ and $\B$ do and let $P_F^{\A'}(a_0,\dots,a_n) = d(a_n,F^\A(a_0,\dots,a_{n-1}))$, and similarly for $\B'$. Now
        \[
            F^\A(a_0,\dots,a_{n-1}) = a_n \iff P_F^{\A'}(a_0,\dots,a_n) = 0.
        \]
        Clearly $\A'\cong\B'$ if and only if $\A\cong\B$.
    \end{enumerate}
\end{proof}

\begin{theorem}\label{The theory of a compact structure is categorical}
    Let $\A$ be a compact structure. Then $\Th(\A)$ is categorical.
\end{theorem}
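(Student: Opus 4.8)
The plan is to prove that any two compact structures $\A$ and $\B$ satisfying $\A\equiv\B$ are isomorphic. By Lemma~\ref{We can assume that the signature is countable}, we may reduce to the case where $\sigma$ is countable and relational: part (ii) lets us replace function symbols by relation symbols while preserving both the isomorphism question and the underlying metric, and part (i) lets us pass to a countable subsignature whose interpretations are dense. Elementary equivalence is preserved under these reductions (the relational replacement only adds definable predicates, and reducts of elementarily equivalent structures remain elementarily equivalent), so we arrive at countable relational $\sigma$ with $\A\equiv\B$, both compact.

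The core of the argument is to use the EF game. Since $\sigma$ is relational, Theorem~\ref{Main theorem} gives $\A\equiv_n\B\implies\playertwo\wins\EF{}n{\A,\B}$ for every $n$, and in fact $\playertwo\wins\EF{\varepsilon}n{\A,\B}$ for every $\varepsilon>0$. The goal is to upgrade these finite-length winning strategies into an actual isomorphism. First I would fix a decreasing sequence $\varepsilon_k\to 0$ and a countable dense subset $\{a_0,a_1,\dots\}$ of $\A$. Using compactness (total boundedness) of $\A$ and $\B$ together with $\playertwo$'s winning strategies in longer and longer games, I would build, via a diagonal/compactness argument, a map $f$ on the dense set so that $f$ is an approximate partial isomorphism to within every $\varepsilon$; concretely, for each finite tuple from the dense set one plays the EF game of matching length to obtain matching elements in $\B$, then takes a limit of these approximations using the compactness of $\B$ to extract convergent subsequences of the images.

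The map $f$ so obtained will respect $d$ exactly (being an $\varepsilon$-isomorphism for all $\varepsilon$ forces $d(f(x),f(y))=d(x,y)$, as in the constant-symbol remark after the partial isomorphism definition) and will agree with every predicate $P^\A$ on the dense set. Since $f$ is an isometry between dense subsets of complete metric spaces, it extends uniquely to an isometric embedding $\bar f\colon\A\to\B$; the predicates, being uniformly continuous, are preserved by $\bar f$ as well. To see that $\bar f$ is surjective I would run the symmetric construction to obtain an isometric embedding $\bar g\colon\B\to\A$, and then use compactness: an isometric self-embedding of a compact metric space is surjective, so $\bar g\circ\bar f$ and $\bar f\circ\bar g$ are surjective, forcing $\bar f$ to be onto. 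Hence $\bar f$ is a surjective isometry preserving all predicates, i.e.\ an isomorphism.

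The main obstacle I expect is the passage from the family of finite-game strategies to a single coherent limit map: the winning strategies for different lengths and different $\varepsilon$ need not cohere, so one must carefully interleave the diagonal enumeration of the dense set, the shrinking $\varepsilon_k$, and the extraction of convergent image-subsequences from the compact space $\B$, ensuring at the limit that the resulting map is simultaneously an $\varepsilon$-isomorphism for every $\varepsilon$ and hence an honest partial isomorphism. The two compactness inputs—total boundedness to extract limits, and the surjectivity of isometric self-embeddings of compact spaces—are what make this closing argument go through cleanly, which is precisely the insight the authors advertise as being obscured by the Keisler--Shelah route.
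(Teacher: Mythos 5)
Your overall strategy is workable and genuinely different from the paper's, but as written there is one concrete gap: after reducing to a countable relational signature $\sigma$, you invoke Theorem~\ref{Main theorem} to conclude $\playertwo\wins\EF{\varepsilon}{n}{\A,\B}$ for every $\varepsilon>0$ and every $n$. That is not what the theorem gives when $\sigma$ is infinite: it only yields winning strategies in $\EF{\varepsilon}{n}{\A\restriction_{\sigma'},\B\restriction_{\sigma'}}$ for \emph{finite} $\sigma'\subseteq\sigma$, and this restriction is essential --- the remark following Theorem~\ref{Main theorem} exhibits a countable relational signature and structures with $\A\equiv\B$ for which $\playerone$ wins the full-signature game already in one move. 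Since your diagonal construction is built entirely on these (in general nonexistent) full-signature strategies, it does not get off the ground as stated. The repair is exactly the device used in the paper's own proof: fix an exhaustion $\sigma_0\subseteq\sigma_1\subseteq\cdots$ of $\sigma$ by finite subsignatures, let the $k$-th game be the length-$k$, precision-$\varepsilon_k$ game between the reducts to $\sigma_k$, with $\playerone$ playing the initial segment $a_0,\dots,a_{k-1}$ of your dense set, and observe that any fixed atomic formula involves only finitely many symbols, hence lies in some $\sigma_K$; your limit argument (extracting convergent subsequences of the image tuples in the compact space $\B$ and using continuity of $\varphi^\B$) then still yields exact preservation of every atomic formula.

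With that patch your proof goes through, and it is worth recording how it differs from the paper's. The paper runs a back-and-forth, enumerating dense subsets of \emph{both} structures and maintaining as an invariant that the finite partial maps preserve all formulae exactly (full elementary equivalence of the expanded structures); this invariant is what allows it to restart games from intermediate positions, and it directly produces a partial isomorphism with dense domain and dense range, whose continuous extension is automatically onto. You instead play every game from the initial position (so no invariant needs to be carried along), obtain a forth-only map, i.e., an isometric embedding $\A\to\B$ preserving all predicates, and then purchase surjectivity separately: the symmetric embedding $\B\to\A$ plus the standard fact that an isometric self-embedding of a compact metric space is surjective. Both closings are correct; yours trades the model-theoretic bookkeeping of the elementary-equivalence invariant for an extra (purely metric) compactness fact.
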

\begin{proof}
    Denote by $\sigma$ the signature of $\A$, and let $\B$ be a $\sigma$-structure such that $\B\models\Th(\A)$. We show that $\B\cong\A$. By Lemma~\ref{The theory of a compact structure enforces compactness}, $\B$ is compact. By Lemma~\ref{We can assume that the signature is countable}, we may assume that $\sigma$ is countable and relational. Now, we are in a situation where both $\A$ and $\B$ are separable, so clearly it is enough to build a partial isomorphism $\A\to\B$ whose domain is dense in $\A$ and range dense in $\B$. Let $\{a_i \mid i<\omega\}$ and $\{b_i \mid i<\omega\}$ be dense in $\A$ and $\B$, respectively. By induction on $i<\omega$, we build finite partial isomorphisms $f_i,g_i\colon\A\to\B$ such that
    \begin{enumerate}
        \item for $i<j<\omega$, $f_i\subseteq g_j\subseteq f_j$,
        \item for all $i<\omega$, $a_i\in\dom(g_{i+1})$ and $b_i\in\ran(f_{i+1})$, and
        \item for $h\in\{f_i,g_i\}$, denoting $\dom(h) = \{\hat a_0,\dots,\hat a_{n-1}\}$, we have
        \[
            (\A,\hat a_0,\dots,\hat a_{n-1}) \equiv (\B, h(\hat a_0),\dots,h(\hat a_{n-1})).
        \]
        for all $k<\omega$ and finite $\sigma'\subseteq\sigma$.
    \end{enumerate}
    At the end, we let $f = \bigcup_{i<\omega}f_i = \bigcup_{i<\omega}g_i$. The third condition makes sure that this is a partial isomorphism and the second that both $\dom(f)$ and $\ran(f)$ are dense in the respective structures. First of all, we set $f_0 = g_0 = \emptyset$. Since $\A\equiv\B$ by assumption, $f_0$ and $g_0$ are as desired.
    Then suppose we have built $f_i$ and $g_i$. Let $\hat a_0,\dots,\hat a_{n-1}$ enumerate $\dom(f_i)$. Now, by the induction hypothesis and Theorem~\ref{Main theorem},
    \[
        \playertwo\wins\EF{}{k}{(\A\restriction_{\sigma'},\hat a_0,\dots,\hat a_{n-1}), (\B\restriction_{\sigma'}, f_i(\hat a_0),\dots,f_i(\hat a_{n-1}))}
    \]
    for all $k<\omega$ and finite $\sigma'$. Fix a sequence $\sigma_k$, $k<\omega$, of finite signatures such that $\sigma_k\subseteq\sigma_{k+1}$ and $\bigcup_{k<\omega}\sigma_k=\sigma$. Now for each $k$, we let $e_k\in\B$ be the move produced by the winning strategy of $\playertwo$ in
    \[
        \EF{1/(k+1)}{k}{(\A\restriction_{\sigma_k},\hat a_0,\dots,\hat a_{n-1}), (\B\restriction_{\sigma_k}, f_i(\hat a_0),\dots,f_i(\hat a_{n-1}))}
    \]
    as a response to $\playerone$ playing $a_i$ as his first move. As $\B$ is compact, $(e_k)_{k<\omega}$ has a convergent subsequence $(e_{k_i})_{i<\omega}$. We let $g_{i+1} = f_i\cup\{(a_i, e)\}$, where $e = \lim_{i\to\infty}e_{k_i}$. Left is to prove that
    \[
        (\A,\hat a_0,\dots,\hat a_{n-1},a_i) \equiv (\B, f_i(\hat a_0),\dots,f_i(\hat a_{n-1}),e).
    \]
    Let $\varphi(x_0,\dots,x_n)$ be a $\sigma$-formula, and let $\varepsilon>0$ be arbitrary. Let $\Theta$ be the modulus given by Lemma~\ref{Essential lemma} for $\varphi$.
    As $\varphi^\B$ is a uniformly continuous function, there is $\delta>0$ such that whenever $\bar b,\bar b'\in\B^{n+1}$ are such that $d(\bar b,\bar b')<\delta$, we have $\abs{\varphi^\B(\bar b) - \varphi^\B(\bar b')} \leq \varepsilon/2$. Now choose $i<\omega$ to be large enough that
    \begin{enumerate}
        \item $\varphi$ is a $\sigma_{k_i}$-formula,
        \item $k_i \geq \qr(\varphi)$,
        \item $\Theta(1/(k_i+1)) \leq \varepsilon/2$, and
        \item $d(e,e_{k_i})<\delta$.
    \end{enumerate}
    By the choice of $e_{k_i}$, we have
    \[
        \playertwo\wins\EF{1/(k_i+1)}{k_i}{(\A\restriction_{\sigma_{k_i}},\hat a_0,\dots,\hat a_{n-1},a_i), (\B\restriction_{\sigma_{k_i}}, f_i(\hat a_0),\dots,f_i(\hat a_{n-1}),e_{k_i})}.
    \]
    It follows from Lemma~\ref{Essential lemma} that
    \[
        \abs{\varphi^\A(\hat a_0,\dots,\hat a_{n-1},a_i) - \varphi^\B(f_i(\hat a_0),\dots,f_i(\hat a_{n-1}),e_{k_i})} \leq \Theta\left(\frac{1}{k_i+1}\right) \leq \frac{\varepsilon}{2},
    \]
    whence
    \begin{align*}
        \hspace{2em}&\hspace{-2em} \abs{\varphi^\A(\hat a_0,\dots,\hat a_{n-1},a_i) - \varphi^\B(f_i(\hat a_0),\dots,f_i(\hat a_{n-1}),e)} \\
        &\leq \abs{\varphi^\A(\hat a_0,\dots,\hat a_{n-1},a_i) - \varphi^\B(f_i(\hat a_0),\dots,f_i(\hat a_{n-1}),e_{k_i})} \\
        &\hspace{2.05em} + \abs{\varphi^\B(f_i(\hat a_0),\dots,f_i(\hat a_{n-1}),e_{k_i}) - \varphi^\B(f_i(\hat a_0),\dots,f_i(\hat a_{n-1}),e)} \\
        &\leq \frac{\varepsilon}{2} + \frac{\varepsilon}{2} \\
        &= \varepsilon.
    \end{align*}
    As $\varepsilon$ was arbitrary, we have
    \[
        \varphi^\A(\hat a_0,\dots,\hat a_{n-1},a_i) = \varphi^\B(f_i(\hat a_0),\dots,f_i(\hat a_{n-1}),e).
    \]
    
    The construction of $f_{i+1}$ is similar.
\end{proof}

Next we give a few inexpressibility results. Note that their proofs also demonstrate why it is essential that, in the winning condition of the game, we require that values of atomic formulae be preserved only up to $\varepsilon$-error.

\begin{corollary}
    For $\sigma=\emptyset$ and any $\delta\in(0,1]$, there is no $\sigma$-sentence $\varphi$ such that for any $\sigma$-structure $\A$,
    \[
        \A\models\varphi = 0 \iff \text{there are $a,b\in\A$ with $d(a,b)=\delta$}.
    \]
\end{corollary}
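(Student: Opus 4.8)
The plan is to argue by contradiction. Suppose such a sentence $\varphi$ exists and put $N=\qr(\varphi)$. I would exhibit two $\emptyset$-structures $\A$ and $\B$ that are elementarily equivalent but such that $\A$ contains a pair of points at distance exactly $\delta$ while $\B$ contains none. Then $\varphi^\A=0$ (since $\A$ realizes $\delta$), and $\A\equiv_N\B$ forces $\varphi^\B=\varphi^\A=0$, so the assumed property of $\varphi$ applied to $\B$ would produce a pair in $\B$ at distance $\delta$, a contradiction. Since $\sigma=\emptyset$ is vacuously relational and its only finite subsignature is $\emptyset$ itself, Theorem~\ref{Main theorem} reduces the goal $\A\equiv\B$ to proving $\playertwo\wins\EF{\varepsilon}{N}{\A,\B}$ for every $\varepsilon>0$ and every $N<\omega$. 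Crucially, the only atomic $\emptyset$-formulae are of the form $d(v_i,v_j)$, so a play is won by $\playertwo$ exactly when the tuples produced in $\A$ and $\B$ have the same pairwise distances up to additive error $\varepsilon$; it is precisely this $\varepsilon$-slack that the construction will exploit.

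For the construction I would, for each $n\ge 1$, take a two-point ``pair'' $\{p_n,q_n\}$ with $d(p_n,q_n)=\delta\cdot\frac{n}{n+1}$, declare the distance between any two points from \emph{distinct} pairs to be $\delta/2$, and let $\B$ be the union of all these pairs. Let $\A$ be $\B$ together with one extra pair $\{p_0,q_0\}$ with $d(p_0,q_0)=\delta$, again at cross-distance $\delta/2$ to every other point. One checks directly that both are metrics (the only nontrivial triangle inequalities involve one within-pair edge of length $<\delta$ or $=\delta$ against two cross-edges $\delta/2+\delta/2=\delta$) of diameter $\delta\le 1$. In $\B$ every within-pair distance $\delta\frac{n}{n+1}$ is strictly below $\delta$ and every cross-distance equals $\delta/2\ne\delta$, so $\B$ realizes no pair at distance exactly $\delta$, whereas $\A$ does via $\{p_0,q_0\}$. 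The point I expect to be the \emph{main obstacle} is completeness: structures must be complete metric spaces, and the naive idea of deleting a limit point fails because the completion reintroduces the distance $\delta$. This is why the within-pair distances approaching $\delta$ are placed in mutually far-apart pairs: all distinct points lie at distance $\ge\delta/2$, so both spaces are uniformly discrete and hence automatically complete, yet their pairwise distances still accumulate at $\delta$ without attaining it in $\B$.

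It then remains to describe $\playertwo$'s strategy in $\EF{\varepsilon}{N}{\A,\B}$. Fix $\varepsilon>0$ and choose $n_0$ with $\delta/(n_0+1)<\varepsilon$, so that every pair of index $\ge n_0$ (and the special pair $\{p_0,q_0\}$) has within-distance in $(\delta-\varepsilon,\delta]$; any two of these therefore differ by less than $\varepsilon$. Player $\playertwo$ maintains a bijection between the pairs touched in $\A$ and those touched in $\B$, matching points to points within corresponding pairs: pairs of index $1\le n<n_0$ are matched to the pair of the same index (an exact distance match), while the distance-$\delta$ pair and all pairs of index $\ge n_0$ are treated as interchangeable and matched to fresh ``large'' pairs on the opposite side (a match good to within $\varepsilon$). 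Because the game has length $\le N$, only finitely many pairs are ever touched, so fresh large indices are always available and no conflict arises. Cross-distances are identically $\delta/2$ on both sides and so are preserved exactly whenever $\playertwo$ keeps distinct pairs distinct; within-pair distances are preserved exactly for small indices and to within $\varepsilon$ otherwise. Hence the resulting map is a partial $\varepsilon$-isomorphism and $\playertwo$ wins. As $\varepsilon$ and $N$ were arbitrary, Theorem~\ref{Main theorem} yields $\A\equiv\B$, completing the contradiction sketched above.
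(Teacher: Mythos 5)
Your proof is correct and takes essentially the same route as the paper: both arguments build two uniformly discrete (hence complete) empty-signature structures, one realizing the distance $\delta$ exactly and the other realizing distances arbitrarily close to $\delta$ but never $\delta$ itself, and then exploit the $\varepsilon$-slack in the winning condition to show that $\playertwo\wins\EF{\varepsilon}{N}{\A,\B}$ for all $\varepsilon>0$ and $N<\omega$, so that Theorem~\ref{Main theorem} contradicts the assumed behaviour of $\varphi$. One point in your favour: your cross-pair distance of $\delta/2$ makes the triangle inequality hold uniformly for every $\delta\in(0,1]$, whereas the paper's configuration (within-distances $\delta+\tfrac{1}{n+1}$ to a center point, all other distances equal to $1$) violates the triangle inequality for $\delta<1/2$ when $n,m$ are large, so your construction is actually the more careful of the two.
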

\begin{proof}
    We suppose that $\delta\leq 1/2$. The case for $\delta>1/2$ is similar.
    Let $\A$ and $\B$ be structures with domain $\omega$ such that
    \begin{enumerate}
        \item $d^\A(0,n) = \delta + \frac{1}{n+1}$ and $d^\A(n,m) = 1$ for all $0<n<m$,
        \item $d^\B(0,1) = \delta$, $d^\B(0,n) = \delta + \frac{1}{n}$ for $n>1$ and $d^\A(n,m) = d^\B(n,m) = 1$ for all $0<n<m$.
    \end{enumerate}
    These are discrete and hence complete, so they are well-defined $\sigma$-structures. Now for any $\varepsilon>0$, $\playerone$ is doomed to lose the EF game of any finite length of precision $\varepsilon$ between $\A$ and $\B$ because with precision $\varepsilon$ he has no way to tell apart two elements that are $\varepsilon$-close to each other using just the metric.
\end{proof}

If $\A$ is a finite metric structure with cardinality $n$, then if $\B$ is another structure with cardinality $\neq n$, there is a sentence $\varphi$ such that $\varphi^\A\neq\varphi^\B$. Yet, there is no sentence that would dictate that the cardinality of a structure is $n$. This result uses the finer structure of elementary equivalence.

\begin{corollary}\label{Cardinality even in the finite is not axiomatizable}
    For $\sigma=\emptyset$ and any $n<\omega$, there is no $\sigma$-sentence $\varphi$ such that for any $\sigma$-structure $\A$,
    \[
        \A\models\varphi = 0 \iff \abs{\A} \geq n.
    \]
\end{corollary}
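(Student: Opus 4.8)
The plan is to show that for any fixed $n$, the structures witnessing "$|\A| \geq n$" cannot be distinguished from structures failing this condition by any single sentence, using the EF game and Theorem~\ref{Main theorem}. The key point is that in the empty signature, a $\sigma$-structure is just a complete metric space of diameter $\leq 1$, and the only atomic formulae are of the form $d(v_i, v_j)$. Since precision in the game is only required up to $\varepsilon$, $\playertwo$ can win by matching points approximately rather than exactly, so she is unable to be forced to reveal exact cardinality.

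Concretely, I would fix a candidate sentence $\varphi$ of some quantifier rank $m$ and aim to produce, for each $n$, two structures $\A$ and $\B$ with $|\A| \geq n$ but $|\B| < n$ (or vice versa) that are elementarily equivalent, contradicting the existence of $\varphi$. The cleanest approach: take $\A$ to be a discrete metric space on $n$ points (all pairwise distances $1$) and $\B$ a discrete metric space on $n-1$ points, but this is too crude because finite discrete spaces of different sizes \emph{are} distinguishable. Instead, following the style of the preceding corollary, I would cluster points. For a given $n$, let $\B$ be an $(n-1)$-point discrete space and let $\A$ be obtained by replacing one point of $\B$ with two points at distance $\eta$ from each other (and distance $1$ from everything else), so $|\A| = n$ while the two close points are essentially indistinguishable up to precision $\eta$. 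Then for any $\varepsilon > 0$, choosing $\eta < \varepsilon$ lets $\playertwo$ win every finite-length $\varepsilon$-game: whenever $\playerone$ plays one of the two $\eta$-close points in $\A$, she responds with the corresponding single point in $\B$, and the metric distances are preserved up to $\eta < \varepsilon$.

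The main subtlety is that elementary equivalence must hold for \emph{all} quantifier ranks simultaneously, and the structures $\A$, $\B$ are fixed once and for all, so I must ensure $\playertwo\wins\EF{\varepsilon}{k}{\A,\B}$ for \emph{every} $k$ and every $\varepsilon > 0$ using a single pair of structures. This is handled by the clustering argument above, since the winning strategy (collapse the $\eta$-close pair to the single point, play every other point faithfully) works for games of arbitrary length: the resulting partial function always distorts metric distances by at most $\eta$, hence is a partial $\varepsilon$-isomorphism provided $\eta < \varepsilon$. By the implication \ref{II wins}$\implies$\ref{Elem equiv} of Theorem~\ref{Main theorem}, applied to the empty (hence finite and relational) signature, we conclude $\A \equiv \B$, and in particular $\varphi^\A = \varphi^\B$ for the hypothesized sentence $\varphi$.

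Finally, I would derive the contradiction: since $|\A| = n \geq n$ we must have $\varphi^\A = 0$, while $|\B| = n-1 < n$ forces $\varphi^\B \neq 0$; but $\A \equiv \B$ gives $\varphi^\A = \varphi^\B$, which is impossible. The one place requiring care is verifying that $\A$ and $\B$ so constructed are genuinely complete metric structures of diameter $\leq 1$ (immediate, as finite spaces are complete and all distances lie in $[0,1]$) and that the strategy description really yields a partial $\varepsilon$-isomorphism for all finite lengths. I expect this verification of $\playertwo$'s strategy across all game lengths to be the only nontrivial step; everything else is bookkeeping, and the argument closely parallels the preceding corollary, merely exploiting cardinality rather than a single prescribed distance.
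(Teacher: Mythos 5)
There is a genuine gap, and it is fatal to the argument as structured: your central claim that the \emph{fixed} pair $\A$, $\B$ (with the cluster distance $\eta$ fixed once and for all) satisfies $\A\equiv\B$ is false. To get $\A\equiv\B$ from Theorem~\ref{Main theorem} you need $\playertwo\wins\EF{\varepsilon}{k}{\A,\B}$ for \emph{every} $\varepsilon>0$, since $\varepsilon$ is chosen by $\playerone$; your proviso ``provided $\eta<\varepsilon$'' cannot be met once $\playerone$ plays $\varepsilon\leq\eta$. And no cleverer strategy rescues this: $\playerone$ wins $\EF{\varepsilon}{2}{\A,\B}$ whenever $\varepsilon<\min\{\eta,1-\eta\}$ by playing the two $\eta$-close points $a_1,a_2\in\A$, since $\playertwo$'s responses $b_1,b_2\in\B$ either coincide (error $\abs{\eta-0}=\eta$) or lie at distance $1$ (error $1-\eta$). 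Equivalently, the quantifier-rank-$2$ sentence $\inf_{x}\inf_{y}\abs{d(x,y)-\eta}$ takes value $0$ in $\A$ but $\min\{\eta,1-\eta\}>0$ in $\B$. Worse, the overall plan of exhibiting two fixed elementarily equivalent structures with $\abs{\A}\geq n>\abs{\B}$ is impossible in principle: any structure with fewer than $n$ points is compact, so by Theorem~\ref{The theory of a compact structure is categorical} elementary equivalence to it implies isomorphism to it. No such pair exists, for any choice of metrics.

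The repair is to reorder the quantifiers, which is exactly what the paper's proof does: fix the putative sentence $\varphi$ \emph{first}, take the modulus $\Theta$ supplied by Lemma~\ref{Essential lemma} for $\varphi$, fix only the small structure (the discrete $(n-1)$-point space $\A$), note $\varphi^\A>0$, and only \emph{then} choose $\varepsilon$ with $\Theta(\varepsilon)<\varphi^\A$ and build the $n$-point structure $\B$ with cluster distance $\eta=\varepsilon/2$ depending on this $\varepsilon$. Your collapsing strategy then does win the single game $\EF{\varepsilon}{\qr(\varphi)}{\A,\B}$ that is actually needed, and Lemma~\ref{Essential lemma}, applied directly rather than through full elementary equivalence, yields $\varphi^\A=\abs{\varphi^\A-\varphi^\B}\leq\Theta(\varepsilon)<\varphi^\A$, a contradiction. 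So your construction and your strategy are the right ones, but they must be deployed against one sentence at a time, with the second structure chosen after $\varepsilon$; the quantitative form of Lemma~\ref{Essential lemma} is precisely what makes this ``one $\varepsilon$-game suffices'' argument work, and is why the paper never needs (and could not have) $\A\equiv\B$.
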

\begin{proof}
    For simplicity, we prove this for $n = 3$. Suppose such a $\varphi$ exists. Let $\Theta$ be for $\varphi$ as in Lemma~\ref{Essential lemma}. Then let $\A$ be the discrete metric space $\{0,1\}$ with $d(0,1) = 1$. Now, as $\abs{\A}\neq 3$, we must have $\varphi^\A > 0$. Let $\varepsilon>0$ be such that $\Theta(\varepsilon) < \varphi^\A$. Then let $\B$ be the metric space $\{0,1,2\}$ with $d(0,1) = d(0,2) = 1$ and $d(1,2) = \varepsilon/2$. Now we play $\EF{\varepsilon}{\qr(\varphi)}{\A,\B}$. The stategy of $\playertwo$ is to pretend the elements $1$ and $2$ in $\B$ are the same element and play according to the ``isomorphism'' $0\mapsto 0$, $1\mapsto 1$. Clearly we have
    \[
        \abs{d^\A(0,1) - d^\B(0,1)} = \abs{d^\A(0,1) - d^\B(0,2)} = 0 < \varepsilon
    \]
    and
    \[
        \abs{d^\A(1,1) - d^\B(1,2)} = d^\B(1,2) = \frac{\varepsilon}{2} < \varepsilon.
    \]
    Hence $\playertwo$ wins. But by Lemma~\ref{Essential lemma}, this means that
    \[
        \Theta(\varepsilon) < \varphi^\A = \abs{\varphi^\A - \varphi^\B} \leq \Theta(\varepsilon),
    \]
    a contradiction.
\end{proof}

\section{Infinite and Dynamic Games}\label{Section: Infinite and Dynamic Games}

Having replicated the connection between first-order logic and EF games of finite length in the metric setting, the next natural question is what kind of infinitary logic the game of length $\omega$ would correspond to. Classically, the second player has a winning strategy in an EF game of length $\omega$ between two structures $\A$ and $\B$ if and only if $\A$ and $\B$ satisfy the same sentences of the logic $\infinitary\infty$.

There are several different definitions for an infinitary version of $\CFO$; a comparison of their expressive power is performed in~\cite{MR3729322}. As a natural counterpart of (finitary) conjunction is the continuous connective $\max$, the natural counterpart of infinitary conjunction is supremum. If one desires an infinitary extension of $\CFO$ whose formulae are continuous, then one needs to restrict the use of supremum in some way.\footnote{Whether or not one restricts suprema for continuity reasons, one of course needs to assume that the free variables of formulae of $\Phi$ are contained in a given finite set of variables, as is the case also in the classical setting.} One such way, employed by~\cite{MR2500090}, is to assume $\sup\Phi$ is a well-formed formula only when $\Phi$ is an equicontinuous set of formulae, as witnessed by some modulus $\Delta$ which is respected by each $\varphi\in\Phi$. In~\cite{MR3142393}, no such restriction is made, resulting in a discontinuous logic.

Whether the continuity restriction be placed on suprema or not, it can be immediately noted that the game defined in Definition~\ref{Game definition} does not quite do the trick for us. Consider the signature $\sigma = \{P_i \mid i<\omega\}$, where each $P_i$ is a unary predicate with $\modulus(P_i) = \frac{1}{i+1}\id$. Then denote $\varphi_i(x) = \min\{(i+1)P_i(x), 1\}$ for $i<\omega$. These are well-defined formulae of $\CFO$ and respect the modulus $\id$. Hence the formula $\varphi = \sup_{i<\omega}\varphi_i$ is a well-defined formula of $\infinitary{\omega_1}$ in the sense of both~\cite{MR2500090} and~\cite{MR3142393}. Then, let $A_i$, $i<\omega$, be infinite sets such that $A_{i+1}\subseteq A_i$ and $A_i\setminus A_{i+1}$ is infinite for all $i<\omega$, and $\bigcap_{i<\omega}A_i$ contains a single element. Let $B_i$, $i<\omega$, be a similar decreasing sequence of infinite sets but with $\bigcap_{i<\omega}B_i = \emptyset$. Now we make $A_0$ and $B_0$ discrete $\sigma$-structures $\A$ and $\B$ by setting
\[
    P_i^\A(t) = \frac{d(t,A_i)}{i+1} \quad\text{and}\quad P_i^\B(t) = \frac{d(t,B_i)}{i+1}.
\]
Now $\playertwo\wins\EF{\varepsilon}{\omega}{\A,\B}$ for any $\varepsilon>0$: if $\playerone$ plays plays the element $a$ inhabiting $\bigcap_{i<\omega}A_i$, then $\playertwo$ responds with any $b\in B_n$ such that $2/(n+1) < \varepsilon$; then for any $i\leq n$, $a\in A_i$ and $b\in B_i$, so
\[
    \abs{P_i^\A(a) - P_i^\B(b)} = 0 < \varepsilon,
\]
and for $i>n$, we have
\[
    \abs{P_i^\A(a) - P_i^\B(b)} \leq P_i^\A(a) + P_i^\B(b) = \frac{d(a,A_i)}{i+1} + \frac{d(b,B_i)}{i+1} \leq \frac{2}{i+1} < \frac{2}{n+1} < \varepsilon.
\]
On the other hand, $(\inf_x\sup_i\varphi_i)^\A = 0$ but $(\inf_x\sup_i\varphi_i)^\B = 1$, so $\A$ and $\B$ do not even satisfy the same sentences of $\infinitary{\omega_1}$ of quantifier rank $1$.

The essence of the problem seems to be the following: preserving atomic formulae arbitrarily well is not enough to preserve the supremum arbitrarily well unless one can exactly preserve the values of atomic formulae. This is due to connectives. It would seem we need a way to restrict more than just the precision at a given play. One way to modify the game would be to put in a modulus $\Delta$ as a parameter: $\playertwo$ wins the game $\EF{\varepsilon,\Delta}{\omega}{\A,\B}$ if the function resulting from a play is a $(\varepsilon,\Delta)$-isomorphism instead of just an $\varepsilon$-isomorphism. For finite signatures, this restriction adds nothing, as is seen in the proof of Theorem~\ref{Main theorem}, but for infinite signatures it may help us find winning strategies. However, the problem posed by the above example will not go away with this addition: in the example, $\playertwo$ will have a winning strategy in any $\EF{\varepsilon,\Delta}{\omega}{\A,\B}$ for $\Delta\geq\id$.

A similar problem is encountered in~\cite{MR3689736}, where the logic of~\cite{MR2500090} is further studied and a version of Scott's isomorphism theorem is proved for separable metric structures in a countable signature. Their solution is to define a fragment of $\infinitary{\omega_1}$ such that every \emph{sentence} is equivalent to a sentence of this fragment, with possibly higher quantifier rank. We can adapt this approach to our game and get a connection between $\infinitary\infty$-equivalence and infinite games. However, it is not clear whether every $\infinitary\infty$-sentence has a normal form in this fragment, as the result of~\cite{MR3689736} employs the machinery of descriptive set theory to show this for $\infinitary{\omega_1}$, and going above $\omega_1$ would seem to require results of generalized descriptive set theory. We leave the question about the normal form open for future research.

\begin{definition}[\cite{MR3689736}]\quad
    \begin{enumerate}
        \item A function $\Delta\colon[0,\infty)^n\to[0,\infty)$ is an \emph{$n$-ary modulus} if it is non-decreasing, subadditive, continuous and vanishes at zero.
        \item An $n$-ary function $f\colon M^n\to M'$ between metric spaces \emph{obeys} or \emph{respects} an $n$-ary modulus $\Delta$ if for all $x_0,y_0,\dots,x_{n-1},y_{n-1}\in M$ we have
        \[
            d(f(x_0,\dots,x_{n-1}),f(y_0,\dots,y_{n-1})) \leq \Delta(d(x_0,y_0),\dots,d(x_{n-1},y_{n-1})).
        \]
        \item A function $\Omega\colon[0,\infty)^\omega\to[0,\infty]$ is a \emph{weak modulus} if it is non-decreasing, subadditive, lower semicontinuous (in the product topology of $[0,\infty)^\omega$), separately continuous in each argument and vanishes at zero.
        \item For $k\in\N$, we denote by $\Omega|_k$ the $k$-truncation of $\Omega$, which is a function $[0,\infty)^k\to[0,\infty]$ defined by
        \[
            \Omega|_k(x_0,\dots,x_{n-1}) = \Omega(x_0,\dots,x_{n-1}, 0, 0, \dots).
        \]
        \item An $n$-ary function respects a weak modulus $\Omega$ if it respects $\Omega|_n$.
    \end{enumerate}
\end{definition}

Note that a modulus as per our prior definition is a unary modulus in this new definition. Given a unary modulus $\Delta$ and a natural number $n$, one can always define an $n$-ary modulus $\Delta_n$ by setting
\[
    \Delta_n(t_0,\dots,t_{n-1}) = \Delta\left(\max_{i<n}t_i\right)
\]
for all $t_0,\dots,t_{n-1}\in[0,\infty)$.

We call $\varphi$ a \emph{basic formula} if it is of the form $u(\psi_0,\dots,\psi_{n-1})$ for atomic $\psi_i$ and some (finitary) connective $u$.

\begin{definition}[\cite{MR3689736}]
    Let $\Omega$ be a weak modulus and $\kappa$ a regular cardinal. The set of $\Omega$-formulae of $\infinitary\kappa$ is defined as follows.
    \begin{enumerate}
        \item Every basic formula $\varphi(v_0,\dots,v_{n-1})$ that respects $\Omega|_n$ is an $n$-ary $\Omega$-formula.
        \item If $\varphi(v_0,\dots,v_n)$ is an $n+1$-ary $\Omega$-formula, then $\sup_{v_n}\varphi$ and $\inf_{v_n}\varphi$ are $n$-ary $\Omega$-formulae.
        \item If $\{\varphi_i \mid i<\alpha\}$, $\alpha<\kappa$, is a set of $n$-ary $\Omega$-formulae, then $\sup_{i<\alpha}\varphi_i$ and $\inf_{i<\alpha}\varphi_i$ are $n$-ary $\Omega$-formulae.
        \item If $\varphi_0,\dots,\varphi_{k-1}$ are $n$-ary $\Omega$-formulae and $u\colon[0,1]^k\to[0,1]$ is a $1$-Lipschitz connective, then $u(\varphi_0,\dots,\varphi_{k-1})$ is an $n$-ary $\Omega$-formula.
    \end{enumerate}
    An $\Omega$-formula is an $n$-ary $\Omega$-formula for some $n<\omega$.
\end{definition}

Notice that there are no cumulative restrictions on subformulae: if $u(\varphi)$ is a basic $\Omega$-formula for atomic $\varphi$, it is not required that $\varphi$ respects $\Omega$. Furthermore, no connective added after the initial step of the definition will ``steepen'' the formula. Thus all the steepness acquired from connectives comes from this initial connective. Notice also that, in an $n$-ary $\Omega$-formula, the free variables are among the first $n$ variable symbols, and quantification is only allowed over the free variable of greatest index.

Now we define a version of the EF game parametrized on a weak modulus $\Omega$. We also define dynamic versions that have an ordinal clock.

\begin{definition}
    Let $\A$ and $\B$ be disjoint structures of the same signature, and let $\bar a\in\A^n$ and $\bar b\in\B^n$.
    \begin{enumerate}
        \item Given $\varepsilon>0$ and a weak modulus $\Omega$, the $(\varepsilon,\Omega)$-Ehrenfeucht--Fraïssé game of length $\omega$ between $(\A,a_0,\dots,a_{n-1})$ and $(\B,b_0,\dots,b_{n-1})$, denoted by
        \[
            \EF{\varepsilon,\Omega}{\omega}{(\A,a_0,\dots,a_{n-1}),(\B,b_0,\dots,b_{n-1})},
        \]
        is played exactly as follows. On each round $i<\omega$, $\playerone$ chooses an element $x_{n+i}\in\A\cup\B$ and $\playertwo$ responds by choosing an element $y_{n+i}\in\A\cup\B$ such that $x_{n+i}\in\A$ if and only if $y_{n+i}\in\B$. A play ends only after $\omega$-many rounds. Then $\playertwo$ wins a play if for all $k<\omega$ and all \emph{basic} $k$-ary $\Omega$-formulae $\varphi(v_0,\dots,v_{k-1})$, we have
        \[
            \abs{\varphi^\A(a_0,\dots,a_{k-1}) - \varphi^\B(b_0,\dots,b_{k-1})} \leq \varepsilon.
        \]
    
        \item Given $\varepsilon>0$ and a weak modulus $\Omega$, the dynamic $(\varepsilon,\Omega)$-Ehrenfeucht--Fraïssé game with clock $\alpha$ between $(\A,\bar a)$ and $(\B,\bar b)$, denoted by
        \[
            \EFD{\alpha}{\varepsilon,\Omega}{(\A,\bar a),(\B,\bar b)},
        \]
        is a version of $\EF{\varepsilon,\Omega}{\omega}{(\A,\bar a),(\B,\bar b)}$ where the length of a play is not infinite but, instead, dynamic: on each round $i$, in addition to the element $x_{n+i}$, $\playerone$ also plays an ordinal $\alpha_i<\alpha$ such that for all $i<j<\omega$, $\alpha_j < \alpha_i$, and a play ends after the round $i$ when $\playerone$ chooses $\alpha_i = 0$.

        \item $\EF{\Omega}{\omega}{(\A,\bar a),(\B,\bar b)}$ is the game where $\playerone$ plays some $\varepsilon>0$ on the first round, in addition to an element of $\A\cup\B$, and then players proceed as if they were playing $\EF{\varepsilon,\Omega}{\omega}{(\A,\bar a),(\B,\bar b)}$. $\EFD{\alpha}{\Omega}{(\A,\bar a),(\B,\bar b)}$ is defined similarly.
    \end{enumerate}
\end{definition}

An important detail about the winning condition is that given a basic $n$-ary $\Omega$-formula $\varphi(v_0,\dots,v_{n-1})$, the variable $v_i$ is always interpreted as $a_i$ in $\A$ and $b_i$ in $\B$. From this it follows that, for instance, one can compare $a_3$ and $b_3$ only using formulae that obey $\Omega|_3$ but $a_{42}$ and $b_{42}$ can be compared using formulae that obey $\Omega|_{42}$. Often $\Omega|_n$ is less restricting than $\Omega|_m$ for $m\leq n$. Another thing to note is that $\EF{\varepsilon,\Omega}{\omega}{(\A,\bar a),(\B,\bar b)}$ (and $\EFD{\alpha}{\varepsilon,\Omega}{(\A,\bar a),(\B,\bar b)}$, respectively) is \emph{not} a game between expansions $\hat\A$ and $\hat\B$ of $\A$ and $\B$ by interpretations $a_i$ and $b_i$ of new constant symbols $c_i$. Rather, the starting position of a play of $\EF{\varepsilon,\Omega}{\omega}{(\A,\bar a),(\B,\bar b)}$ corresponds to a play of $\EF{\varepsilon,\Omega}{\omega}{\A,\B}$ where $n$ rounds have already been played and the current position is $((a_0,b_0),\dots,(a_{n-1},b_{n-1}))$.

We now connect the dynamic games to pseudometrics defined in~\cite{MR3689736} to measure how ``far'' two structures are from satisfying the same $\Omega$-conditions of $\infinitary\infty$ of a given quantifier rank.

\begin{definition}[\cite{MR3689736}]
    For a weak modulus $\Omega$ and two structures $\A$ and $\B$, we define functions $r_{\alpha,n}^{\A,\B,\Omega}\colon\A^n\times\B^n\to[0,1]$ for each ordinal $\alpha$ as follows.
    \begin{enumerate}
        \item If $\alpha=0$, then
        \[
            r_{\alpha,n}^{\A,\B,\Omega}(\bar{a},\bar{b}) = \sup\{|\varphi^\A(\bar{a}) - \varphi^\B(\bar{b})| : \text{$\varphi$ is a basic $n$-ary $\Omega$-formula}\}.
        \]
        \item If $\alpha = \beta+1$, then
        \[
            r_{\alpha,n}^{\A,\B,\Omega}(\bar{a},\bar{b}) = \max\{\sup_{a\in\A}\inf_{b\in\B}r_{\beta,n+1}^{\A,\B,\Omega}(\bar{a}a,\bar{b}b),\ \sup_{b\in\B}\inf_{a\in\A}r_{\beta,n+1}^{\A,\B,\Omega}(\bar{a}a,\bar{b}b)\}.
        \]
        \item If $\alpha$ is a limit, then
        \[
            r_{\alpha,n}^{\A,\B,\Omega}(\bar{a},\bar{b}) = \sup_{\beta<\alpha}r_{\beta,n}^{\A,\B,\Omega}(\bar{a},\bar{b}).
        \]
    \end{enumerate}
    Then we let $r_{\infty,n}^{\A,\B,\Omega}(\bar{a},\bar{b})$ denote the supremum of $r_{\alpha,n}^{\A,\B,\Omega}(\bar{a},\bar{b})$ over all ordinals $\alpha$. We may omit $n$ and $\Omega$ from the notation when they are clear from the context. We may also write $r_\alpha(\A\bar{a},\B\bar{b})$ for $r_{\alpha}^{\A,\B}(\bar{a},\bar{b})$.
\end{definition}

\begin{fact}[\cite{MR3689736}]\quad
    \begin{enumerate}
        \item $r_{\alpha,n}$ is a pseudometric on all pairs $(\A,\bar a)$ for $\A$ a structure and $\bar a\in\A^n$.
        \item If $\beta\leq\alpha$, then $r_\beta\leq r_\alpha$.
        \item For all $\A$ and $\B$, there is $\alpha<(\density(\A)+\density(\B))^+$ such that for all $\beta>\alpha$, $r_\alpha^{\A,\B} = r_\beta^{\A,\B} = r_\infty^{\A,\B}$.
        \item For all $\alpha$, $\A$, $\B$, $n$, $\bar a\in\A^n$ and $\bar b\in\B^n$,
        \[
            \hspace{2em} r_{\alpha,n}^{\A,\B,\Omega}(\bar{a},\bar{b}) = \sup\{|\varphi^\A(\bar{a}) - \varphi^\B(\bar{b})| : \text{$\varphi$ an $n$-ary $\Omega$-formula with $\qr(\varphi)\leq\alpha$}\}.
        \]
    \end{enumerate}
\end{fact}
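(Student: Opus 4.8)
The plan is to prove all four parts by transfinite induction on $\alpha$, always treating the arity $n$ as a universally quantified parameter, since the recursion defining $r_{\beta+1,n}$ descends from level $n$ to level $n+1$. For part (1) I would show that the three pseudometric axioms are preserved at each stage. At $\alpha=0$ they follow from the corresponding properties of $|\cdot|$. At a successor $\alpha=\beta+1$ the only nontrivial point is the triangle inequality, which I would obtain by an $\varepsilon$-argument: given $\bar a\in\A^n$, first choose $b\in\B$ nearly realizing the relevant infimum for $(\A\bar a,\B\bar b)$, then for that $b$ choose $c\in\C$ nearly realizing the infimum for $(\B\bar b,\C\bar c)$, and invoke the inductive triangle inequality for $r_{\beta}$ at level $n+1$; symmetry and vanishing on the diagonal are immediate from the symmetric shape of the $\max$ and from matching the chosen element. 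The limit stage is trivial, as a supremum of pseudometrics is a pseudometric. For part (2) it suffices to establish the one-step inequality $r_{\gamma,n}\le r_{\gamma+1,n}$ for all $\gamma,n$, the general statement then following by a routine further induction (the limit case being immediate from the definition). The base case $r_{0,n}\le r_{1,n}$ rests on the observation that every basic $n$-ary $\Omega$-formula is also a basic $(n+1)$-ary $\Omega$-formula, since it still respects $\Omega|_{n+1}$ because $\Omega$ is non-decreasing; hence its value is bounded by $r_{0,n+1}(\bar aa,\bar bb)$ for all $a,b$, and the successor and limit steps propagate this through the monotone operations $\sup_a\inf_b$.

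For part (3) I would first show, again by induction on $\alpha$, that each $r_{\alpha,n}$ is uniformly continuous with a modulus depending only on $\Omega$ and $n$, not on $\alpha$: at $\alpha=0$ the maps $(\bar a,\bar b)\mapsto|\varphi^\A(\bar a)-\varphi^\B(\bar b)|$ are equicontinuous because every $\varphi$ respects $\Omega|_n$, and the operations $\sup_a\inf_b$ and $\sup_{\beta<\alpha}$ preserve a fixed modulus. Fix dense sets $D_\A\subseteq\A$, $D_\B\subseteq\B$ with $|D_\A|+|D_\B|\le\lambda:=\density(\A)+\density(\B)$. For each $n$ and each pair from $D_\A^n\times D_\B^n$, the sequence $\alpha\mapsto r_{\alpha,n}(\bar a,\bar b)$ is non-decreasing (part 2) and bounded in $[0,1]$, hence has only countably many strict increases and so stabilizes at a countable ordinal; there are at most $\lambda$ such pairs, so the supremum of all these stabilization ordinals is some $\alpha^*<\lambda^+$. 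For $\beta\ge\alpha^*$ the continuous functions $r_{\beta,n}$ and $r_{\alpha^*,n}$ agree on the dense set $D_\A^n\times D_\B^n$, hence agree everywhere, giving $r_{\alpha^*}=r_\beta=r_\infty$. (When $\lambda$ is finite the structures are compact and the same argument stabilizes even sooner.)

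Part (4) is the substantive one: I would prove $r_{\alpha,n}(\bar a,\bar b)=\sup\{|\varphi^\A(\bar a)-\varphi^\B(\bar b)|:\text{$\varphi$ $n$-ary $\Omega$-formula, }\qr(\varphi)\le\alpha\}$ by induction on $\alpha$, writing $R_\alpha$ for the right-hand side. At $\alpha=0$ the bound $R_0\le r_0$ is a structural induction showing that $1$-Lipschitz connectives and infinitary suprema and infima are non-expansive, so they cannot push $|\varphi^\A(\bar a)-\varphi^\B(\bar b)|$ beyond the basic bound, while $R_0\ge r_0$ is immediate since basic formulae have quantifier rank $0$. At a limit $\alpha$ both inequalities follow from the induction hypothesis and monotonicity, using that a quantified formula of rank $\le\alpha$ in fact has rank $<\alpha$. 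For the successor $\alpha=\beta+1$, the inequality $R_{\beta+1}\le r_{\beta+1}$ is a structural induction on $\varphi$ whose only interesting case is $\varphi=\inf_{v_n}\psi$ (or $\sup_{v_n}\psi$) with $\qr(\psi)\le\beta$: the usual infimum manipulation, combined with $|\psi^\A(\bar aa)-\psi^\B(\bar bb)|\le r_{\beta,n+1}(\bar aa,\bar bb)$ from the induction hypothesis and both terms of the $\max$ defining $r_{\beta+1,n}$, yields the bound.

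The main obstacle is the reverse inequality $R_{\beta+1}\ge r_{\beta+1}$, where one must manufacture a witnessing formula of rank $\le\beta+1$, and here I would exploit the infinitary connectives. Assuming the $\max$ defining $r_{\beta+1,n}(\bar a,\bar b)$ is attained by $\sup_a\inf_b r_{\beta,n+1}(\bar aa,\bar bb)$, fix $a^*\in\A$ with $\inf_b r_{\beta,n+1}(\bar aa^*,\bar bb)>r_{\beta+1,n}(\bar a,\bar b)-\varepsilon$ and form the ``approximate Scott formula'' centered at $\bar aa^*$,
\[
    \Psi(v_0,\dots,v_n) = \sup\{\, |\psi(v_0,\dots,v_n) - \psi^\A(\bar aa^*)| : \text{$\psi$ $(n+1)$-ary $\Omega$-formula, }\qr(\psi)\le\beta\,\}.
\]
Since $t\mapsto|t-\psi^\A(\bar aa^*)|$ is $1$-Lipschitz and the outer supremum is an infinitary supremum of $\Omega$-formulae of rank $\le\beta$ (both operations permitted without any steepening), $\Psi$ is a legitimate $(n+1)$-ary $\Omega$-formula of quantifier rank $\le\beta$; by the induction hypothesis its value at any $\bar bb$ equals exactly $r_{\beta,n+1}(\bar aa^*,\bar bb)$, while $\Psi^\A(\bar aa^*)=0$. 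Then $\varphi:=\inf_{v_n}\Psi$ has rank $\le\beta+1$, satisfies $\varphi^\A(\bar a)=0$ and $\varphi^\B(\bar b)=\inf_b r_{\beta,n+1}(\bar aa^*,\bar bb)>r_{\beta+1,n}(\bar a,\bar b)-\varepsilon$, which is the required witness (the symmetric subcase uses a formula centered in $\B$). The delicate point throughout is precisely that this construction is available only because $\Omega$-formulae are closed under arbitrary suprema of equal rank and under $1$-Lipschitz connectives without incurring further modulus constraints; this is what allows the pseudometric $r_\beta$ to be internalized as the value of a single formula, and it is the technical heart of the equivalence.
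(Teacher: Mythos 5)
The paper itself gives no proof of this statement: it is imported verbatim as a Fact with a citation to \cite{MR3689736}, so your attempt can only be judged on its own terms and against the Scott-analysis route of that reference. Your parts (1) and (2) are correct, and your overall strategy for (3) and (4) (equicontinuity plus dense sets; internalizing $r_\beta$ as a single formula) is the right one, but each of (3) and (4) contains a genuine gap. In (3), the inference ``only countably many strict increases, and so stabilizes at a countable ordinal'' is a non sequitur: a non-decreasing, limit-continuous $[0,1]$-valued transfinite sequence with countably many jumps can jump arbitrarily late --- the sequence that is $0$ up to and including $\omega_1$ and $1$ afterwards has exactly one jump. Nothing you say rules this out for a fixed pair, because a pair's sequence is not autonomous: whether it jumps at stage $\alpha$ depends on the whole system of functions at arity $n+1$. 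The missing ingredient is that the recursion is \emph{memoryless}: $r_{\alpha+1}$ is computed from $r_\alpha$ alone (at all arities simultaneously), so if the entire system is unchanged for one step it never changes again. The correct assembly is then a pigeonhole at $\lambda^+$ (where $\lambda=\density(\A)+\density(\B)$): if the system changed at every stage $\alpha<\lambda^+$, then by your equicontinuity-and-density observation every such stage would be witnessed by a strict jump at one of the $\lambda$ many dense pairs, so by regularity of $\lambda^+$ some single pair would jump $\lambda^+$ many times, contradicting the countability of its jumps. Hence a one-step fixed point occurs at some $\alpha_0<\lambda^+$ and is permanent. Your per-pair stabilization bound cannot be established without this global argument, so as written the proof of (3) fails.

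In (4), your witness $\Psi$ is a supremum over \emph{all} $(n+1)$-ary $\Omega$-formulae of rank $\le\beta$. That collection is not a set: already the rank-$0$ $\Omega$-formulae form a proper class, since for any set of them (of any cardinality) the infinitary supremum is again a rank-$0$ $\Omega$-formula of $\infinitary\kappa$ for $\kappa$ large enough. But the formation rule licenses suprema only over sets of size $<\kappa$ for a single regular $\kappa$, so $\Psi$ is not a well-formed formula. The repair is standard but must be made explicit: in evaluating $\Psi$ in $\A$ and in $\B$, only the pair of interpretations $(\psi^\A,\psi^\B)$ of each $\psi$ matters, and only set-many such pairs occur; replacing the class by a set of representatives realizing every such pair yields a genuine $\Omega$-formula of rank $\le\beta$ whose value at $\bar b b$ is exactly $r_{\beta,n+1}(\bar a a^*,\bar b b)$, as you intended (its values in structures other than $\A,\B$ are irrelevant). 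Alternatively --- and this is closer to what the cited source does --- one constructs Scott-type formulae by recursion on $\beta$, with infinitary suprema and infima indexed by (a dense subset of) the structures themselves, which keeps all formulae inside $\infinitary{\lambda^+}$ rather than an unspecified $\infinitary\kappa$. With these two repairs, your outline does go through.
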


\begin{theorem}\label{dynamic games vs r-distances}
    Let $\Omega$ be a weak modulus. Then for all ordinals $\alpha$, structures $\A$ and $\B$ and tuples $\bar a\in\A^n$ and $\bar b\in\B^n$, we have
    \[
        r_{\alpha,n}^{\A,\B,\Omega}(\bar{a},\bar{b}) = \inf\{\varepsilon\in(0,1] \mid \playertwo\wins\EFD{\alpha}{\varepsilon,\Omega}{(\A,\bar{a}),(\B,\bar{b})} \}.
    \]
\end{theorem}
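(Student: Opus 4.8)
The plan is to argue by transfinite induction on $\alpha$, but rather than proving the superficially cleaner equivalence ``$\playertwo\wins\EFD{\alpha}{\varepsilon,\Omega}{(\A,\bar a),(\B,\bar b)}$ iff $r_{\alpha,n}^{\A,\B,\Omega}(\bar a,\bar b)\le\varepsilon$'', I would prove simultaneously the two implications
\textbf{(A)} if $\playertwo\wins\EFD{\alpha}{\varepsilon,\Omega}{(\A,\bar a),(\B,\bar b)}$ then $r_{\alpha,n}^{\A,\B,\Omega}(\bar a,\bar b)\le\varepsilon$, and
\textbf{(B)} if $r_{\alpha,n}^{\A,\B,\Omega}(\bar a,\bar b)<\varepsilon$ then $\playertwo\wins\EFD{\alpha}{\varepsilon,\Omega}{(\A,\bar a),(\B,\bar b)}$,
for all $n$, all $\bar a\in\A^n$, $\bar b\in\B^n$ and all $\varepsilon\in(0,1]$. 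Granting both, the stated equality follows: by (A) every $\varepsilon$ for which $\playertwo$ wins satisfies $r_{\alpha,n}\le\varepsilon$, so $r_{\alpha,n}$ is a lower bound for the infimum, while by (B) every $\varepsilon>r_{\alpha,n}$ belongs to the set, so the infimum is at most $r_{\alpha,n}$. The boundary cases are dealt with separately: if $r_{\alpha,n}=0$ the infimum over $(0,1]$ is $0$, and if $r_{\alpha,n}=1$ one notes that $\playertwo$ wins for $\varepsilon=1$ trivially, since all formulae are $[0,1]$-valued and hence $\abs{\varphi^\A-\varphi^\B}\le 1$ always.

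The engine of the induction is the recursive unfolding of the dynamic game. On the first round $\playerone$ chooses an ordinal $\alpha_0<\alpha$ together with an element and names a structure, $\playertwo$ answers, and the remainder of the play is exactly $\EFD{\alpha_0}{\varepsilon,\Omega}{(\A,\bar a a),(\B,\bar b b)}$ for the extended position $(\bar a a,\bar b b)$; the clock-$0$ game is the zero-round game whose winning condition is checked at the current position. Here one uses that every basic $k$-ary $\Omega$-formula with $k\le m$ is also a basic $m$-ary $\Omega$-formula (because $\Omega$ is non-decreasing), so the winning condition at a length-$m$ position is precisely $r_{0,m}\le\varepsilon$. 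This makes the base case $\alpha=0$ immediate from the definition of $r_{0,n}$.

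For the successor step $\alpha=\beta+1$, I would argue (A) by letting $\playerone$ open with $\alpha_0=\beta$ and an arbitrary $a\in\A$: the winning answer $b$ of $\playertwo$ then wins the clock-$\beta$ subgame, so the induction hypothesis (A) gives $r_{\beta,n+1}(\bar a a,\bar b b)\le\varepsilon$, whence $\sup_a\inf_b r_{\beta,n+1}(\bar a a,\bar b b)\le\varepsilon$, and symmetrically for openings in $\B$; taking the maximum yields $r_{\beta+1,n}(\bar a,\bar b)\le\varepsilon$. For (B), assuming $r_{\beta+1,n}(\bar a,\bar b)<\varepsilon$ and that $\playerone$ opens with some $\alpha_0\le\beta$ and $a\in\A$, the bound $\inf_b r_{\beta,n+1}(\bar a a,\bar b b)\le r_{\beta+1,n}(\bar a,\bar b)<\varepsilon$ lets $\playertwo$ pick a genuine response $b$ with $r_{\beta,n+1}(\bar a a,\bar b b)<\varepsilon$; since $\alpha_0\le\beta$, monotonicity of $r$ in the ordinal parameter (the second clause of the preceding Fact) gives $r_{\alpha_0,n+1}(\bar a a,\bar b b)<\varepsilon$, so the induction hypothesis (B) supplies a winning strategy for the clock-$\alpha_0$ subgame, which $\playertwo$ concatenates with the response $b$. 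The limit step reduces to the successor one: for $\beta<\alpha$ one has $\beta+1<\alpha$ and hence $r_{\beta+1,n}\le r_{\alpha,n}$, so the successor argument applies verbatim to each $\alpha_0<\alpha$; on the (A) side this gives $r_{\beta,n}\le r_{\beta+1,n}\le\varepsilon$ for every $\beta<\alpha$, and $r_{\alpha,n}=\sup_{\beta<\alpha}r_{\beta,n}\le\varepsilon$.

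The main obstacle is exactly the non-attainment of the infima defining $r$: from $\inf_b r_{\beta,n+1}(\bar a a,\bar b b)\le\varepsilon$ alone $\playertwo$ might be unable to realize any single response with value $\le\varepsilon$, which is why a clean ``iff $\le\varepsilon$'' statement would fail in the backward direction. Passing to the strict inequality $r_{\alpha,n}<\varepsilon$ in (B) provides precisely the slack needed to extract a concrete response with value $<\varepsilon$, after which the induction hypothesis can be invoked. The other point requiring care is the reconciliation of the clock mechanism with the formula-based winning condition: one must check that plays are always finite (by well-foundedness of the ordinals, the decreasing sequence $\alpha_0>\alpha_1>\cdots$ terminates) and that the terminal winning condition coincides with $r_0$ at the final position, as indicated above.
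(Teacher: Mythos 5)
Your proof is correct and follows essentially the same route as the paper's: a transfinite induction establishing both that a win for $\playertwo$ at precision $\varepsilon$ forces $r_{\alpha,n}\leq\varepsilon$ (lower bound) and that $r_{\alpha,n}<\varepsilon$ yields a winning strategy (greatest lower bound), with the successor step using the strict inequality to extract a concrete response $b$ and the induction hypothesis to finish. The differences are only cosmetic: you spell out the base, limit and boundary cases (e.g.\ $\varepsilon=1$ and the identification of the winning condition with $r_{0,m}$) that the paper dismisses as clear, and you treat the opening clock value $\alpha_0\leq\beta$ uniformly via monotonicity of $r_\alpha$ in $\alpha$, where the paper separates the cases $\alpha_0<\beta$ and $\alpha_0=\beta$.
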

\begin{proof}
    We begin by proving that $r_{\alpha,n}^{\A,\B}(\bar{a},\bar{b})$ is a lower bound of the set
    \[
        \{\varepsilon\in(0,1] \mid \playertwo\wins\EFD{\alpha}{\varepsilon,\Omega}{(\A,\bar{a}),(\B,\bar{b})} \}.
    \]
    We show by induction on $\alpha$ that if $\playertwo\wins\EFD{\alpha}{\varepsilon,\Omega}{(\A,\bar{a}),(\B,\bar{b})}$, then $r_{\alpha,n}^{\A,\B}(\bar{a},\bar{b})\leq\varepsilon$. The cases $\alpha = 0$ and $\alpha$ limit are clear, so we consider the case $\alpha = \beta+1$. Suppose that $\playertwo\wins\EFD{\alpha}{\varepsilon,\Omega}{(\A,\bar{a}),(\B,\bar{b})}$. By definition, 
    \[
        r_{\alpha,n}^{\A,\B}(\bar{a},\bar{b}) = \max\{\sup_{a\in\A}\inf_{b\in\B}r_{\beta,n+1}^{\A,\B}(\bar{a}a,\bar{b}b),\ \sup_{b\in\B}\inf_{a\in\A}r_{\beta,n+1}^{\A,\B}(\bar{a}a,\bar{b}b)\}.
    \]
    By symmetry, it is enough to show that $\sup_{a\in\A}\inf_{b\in\B}r_{\beta,n+1}^{\A,\B}(\bar{a}a,\bar{b}b)\leq\varepsilon$. Let $a\in\A$ be arbitrary. We now play $\EFD{\alpha}{\varepsilon,\Omega}{(\A,\bar{a}),(\B,\bar{b})}$. Let $(a,\beta)$ be the first move by $\playerone$. Then the winning strategy of $\playertwo$ produces some $b\in\B$ such that $\playertwo\wins\EFD{\beta}{\varepsilon,\Omega}{(\A,\bar{a}a),(\B,\bar{b}b)}$. By the induction hypothesis, this means that $r^{\A,\B}_{\beta,n+1}(\bar aa,\bar bb) \leq \varepsilon$. As for any $a\in\A$ there is $b\in\B$ such that $r^{\A,\B}_{\beta,n+1}(\bar aa,\bar bb) \leq \varepsilon$, we have $\sup_{a\in\A}\inf_{b\in\B}r_{\beta,n+1}^{\A,\B}(\bar{a}a,\bar{b}b)\leq\varepsilon$.

    To prove that $r_{\alpha,n}^{\A,\B}(\bar{a},\bar{b})$ is the greatest lower bound of the particular set, suppose $\varepsilon>r_{\alpha,n}^{\A,\B}(\bar{a},\bar{b})$. We show that $\playertwo\wins\EFD{\alpha}{\varepsilon,\Omega}{(\A,\bar{a}),(\B,\bar{b})}$. We again proceed by induction on $\alpha$, and the cases $\alpha = 0$ and $\alpha$ limit are clear. Suppose $\alpha = \beta+1$, and let us play $\EFD{\alpha}{\varepsilon,\Omega}{(\A,\bar{a}),(\B,\bar{b})}$. Note that since $r_\beta \leq r_\alpha$, by the induction hypothesis $\playertwo\wins\EFD{\beta}{\varepsilon,\Omega}{(\A,\bar{a}),(\B,\bar{b})}$. So if $\playerone$ plays $\alpha_0 < \beta$ in his first move, then $\playertwo$ can use her winning strategy in the $\beta$-game to win. We may thus assume that $\playerone$ plays $\alpha_0 = \beta$. Let $a\in\A$ be the element $\playerone$ plays on the first round (the case where $\playerone$ plays in $\B$ is symmetric). Now, as $r_{\alpha,n}^{\A,\B}(\bar a,\bar b) < \varepsilon$, we have $\sup_{a'\in\A}\inf_{b\in\B}r_{\beta,n+1}^{\A,\B}(\bar{a}a',\bar{b}b)<\varepsilon$. Hence, in particular, $\inf_{b\in\B}r_{\beta,n+1}^{\A,\B}(\bar{a}a,\bar{b}b)<\varepsilon$, so there is $b\in\B$ with $r_{\beta,n+1}^{\A,\B}(\bar{a}a,\bar{b}b)<\varepsilon$. We let $\playertwo$ play $b$ as a response on the first round. By the induction hypothesis, $\playertwo\wins\EFD{\beta}{\varepsilon,\Omega}{(\A,\bar{a}a),(\B,\bar{b}b)}$, so following her strategy in this $\beta$-game, $\playertwo$ survives the rest of the play in the $\alpha$-game.
\end{proof}

We write $\A\equiv_\alpha^\Omega\B$ if $\varphi^\A = \varphi^\B$ for all $\Omega$-sentences of quantifier rank $\leq\alpha$. If $\A\equiv_\alpha^\Omega\B$ for all $\alpha$, we write $\A\equiv_\infty^\Omega\B$.

\begin{corollary}
    The following are equivalent for all structures $\A$ and $\B$, ordinals $\alpha$ and non-negative reals $\varepsilon$.
    \begin{enumerate}
        \item $\playertwo\wins\EFD{\alpha}{\delta,\Omega}{(\A,\bar a),(\B,\bar b)}$ for all $\delta>\varepsilon$.
        \item $\abs{\varphi^\A(\bar a) - \varphi^\B(\bar b)}\leq\varepsilon$ for all $n$, $\bar a\in\A^n$ and $\bar b\in\B^n$.
    \end{enumerate}
    In particular, $\playertwo\wins\EFD{\alpha}{\Omega}{\A,\B}$ if and only if $\A\equiv_\alpha^\Omega\B$.
\end{corollary}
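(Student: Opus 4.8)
The plan is to read off both conditions from the single quantity $r_{\alpha,n}^{\A,\B,\Omega}(\bar a,\bar b)$, using Theorem~\ref{dynamic games vs r-distances} for the game side and part~(4) of the Fact above for the formula side. Throughout I write $r_{\alpha,n}$ for $r_{\alpha,n}^{\A,\B,\Omega}(\bar a,\bar b)$, where $n$ is the common length of $\bar a$ and $\bar b$, and I read condition~(2) as asserting $\abs{\varphi^\A(\bar a)-\varphi^\B(\bar b)}\leq\varepsilon$ for every $n$-ary $\Omega$-formula $\varphi$ with $\qr(\varphi)\leq\alpha$.

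First I would record a monotonicity observation about the game. A play of $\EFD{\alpha}{\delta,\Omega}{(\A,\bar a),(\B,\bar b)}$ and of $\EFD{\alpha}{\delta',\Omega}{(\A,\bar a),(\B,\bar b)}$ proceeds through exactly the same moves; only the winning condition differs. Since the condition $\abs{\varphi^\A-\varphi^\B}\leq\delta$ for basic $\Omega$-formulae $\varphi$ implies the same inequality with $\delta'\geq\delta$ in place of $\delta$, any winning strategy for $\playertwo$ at precision $\delta$ is again winning at every precision $\delta'>\delta$. Hence
\[
    S \coloneqq \{\delta\in(0,1] \mid \playertwo\wins\EFD{\alpha}{\delta,\Omega}{(\A,\bar a),(\B,\bar b)}\}
\]
is upward closed in $(0,1]$, and precisions above $1$ always win since all formula values lie in $[0,1]$.

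Next I would translate each side. By part~(4) of the Fact, $r_{\alpha,n}$ is the supremum of $\abs{\varphi^\A(\bar a)-\varphi^\B(\bar b)}$ over $n$-ary $\Omega$-formulae $\varphi$ of quantifier rank $\leq\alpha$; as a supremum is $\leq\varepsilon$ precisely when all of its terms are, condition~(2) is equivalent to $r_{\alpha,n}\leq\varepsilon$. By Theorem~\ref{dynamic games vs r-distances}, $r_{\alpha,n}=\inf S$. Using that $S$ is upward closed, $(\varepsilon,1]\subseteq S$ holds exactly when $\varepsilon\geq\inf S$: if $\varepsilon\geq\inf S$, then each $\delta>\varepsilon$ exceeds some member of $S$ and so lies in $S$, whereas if $\varepsilon<\inf S$, any $\delta$ with $\varepsilon<\delta<\inf S$ shows $(\varepsilon,1]\not\subseteq S$. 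Thus condition~(1), which asserts that every $\delta>\varepsilon$ lies in $S$, is also equivalent to $r_{\alpha,n}\leq\varepsilon$, and (1) and (2) coincide. The ``in particular'' clause is then the instance $n=0$, $\varepsilon=0$: unwinding the definition of $\EFD{\alpha}{\Omega}{\A,\B}$, in which $\playerone$ first chooses some $\varepsilon>0$, $\playertwo$ wins this game iff she wins $\EFD{\alpha}{\delta,\Omega}{\A,\B}$ for every $\delta>0$, which is condition~(1) at $\varepsilon=0$; and condition~(2) with empty tuples says $\varphi^\A=\varphi^\B$ for all $\Omega$-sentences of quantifier rank $\leq\alpha$, i.e.\ $\A\equiv_\alpha^\Omega\B$.

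The only point demanding care is the strict inequality $\delta>\varepsilon$ in condition~(1). Because the infimum defining $r_{\alpha,n}$ need not be attained, one cannot simply replace ``for all $\delta>\varepsilon$'' by ``$\playertwo$ wins at precision $\varepsilon$'': at the boundary value $\varepsilon=r_{\alpha,n}$ the former may hold while the latter fails. Isolating the upward closedness of $S$, which renders the choice of $\delta>\varepsilon$ immaterial, is exactly what lets the open-versus-closed distinction wash out; the rest is bookkeeping layered on the two previously established results.
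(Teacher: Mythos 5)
Your proposal is correct and follows exactly the route the paper intends: the paper states this as an immediate corollary of Theorem~\ref{dynamic games vs r-distances} together with part~(4) of the Fact, and your argument simply spells out that derivation. The one detail you add explicitly --- the upward closedness of the set of winning precisions, which is needed to pass from $\inf S\leq\varepsilon$ back to ``$\playertwo$ wins at every $\delta>\varepsilon$'' --- is a genuine (if small) point that the paper leaves tacit, and you handle it correctly.
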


\begin{lemma}
    $\playertwo\wins\EF{\varepsilon,\Omega}{\omega}{(\A,\bar a),(\B,\bar b)}$ if and only if $\playertwo\wins\EFD{\alpha}{\varepsilon,\Omega}{(\A,\bar a),(\B,\bar b)}$ for all ordinals $\alpha$.
\end{lemma}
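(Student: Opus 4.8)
The plan is to prove the two implications separately, viewing the $\omega$-game as a game with a closed winning condition for $\playertwo$ and the dynamic games as its well-founded approximations. For the implication from left to right, suppose $\playertwo$ has a winning strategy $\tau$ in $\EF{\varepsilon,\Omega}{\omega}{(\A,\bar a),(\B,\bar b)}$. I would have $\playertwo$ play $\tau$ in $\EFD{\alpha}{\varepsilon,\Omega}{(\A,\bar a),(\B,\bar b)}$ while simply ignoring the ordinals played by $\playerone$. Because the ordinals $\playerone$ plays strictly decrease, every play of the dynamic game is finite, so any completed dynamic play is a finite position which, together with $\playertwo$ continuing to follow $\tau$, extends to a full $\tau$-play of the $\omega$-game. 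Since the winning condition mentions only basic $k$-ary $\Omega$-formulae, each depending on finitely many of the chosen elements, the finitely many inequalities required at the end of the dynamic play are among those guaranteed by $\tau$ in the $\omega$-game. Hence $\playertwo$ wins $\EFD{\alpha}{\varepsilon,\Omega}{(\A,\bar a),(\B,\bar b)}$ for every ordinal $\alpha$.

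For the converse I would argue by contraposition. Call a finite position \emph{dead} if some basic $k$-ary $\Omega$-formula separates the played tuples by strictly more than $\varepsilon$; this depends only on the first $k$ chosen elements, and $\playertwo$ wins the $\omega$-game precisely by avoiding dead positions at every finite stage. Thus $\playertwo$'s winning set is an intersection of clopen conditions and therefore closed, so by the Gale--Stewart theorem on determinacy of closed games the $\omega$-game is determined. Assuming $\playertwo$ has no winning strategy, I would fix a winning strategy $\rho$ for $\playerone$ and let $T$ be the tree of positions reached when $\playerone$ follows $\rho$ and no dead position has yet occurred. Since $\rho$ is winning, $T$ has no infinite branch, hence is well-founded and carries a rank function $\mathrm{rk}$ into the ordinals; put $\alpha=\mathrm{rk}(\text{root})$.

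I would then let $\playerone$ win $\EFD{\alpha+1}{\varepsilon,\Omega}{(\A,\bar a),(\B,\bar b)}$ as follows: at an alive position $p$ he plays the element prescribed by $\rho$ together with the clock value $\mathrm{rk}(p)$. Maintaining the invariant that the previously played ordinal strictly exceeds $\mathrm{rk}(p)$ makes this a legal, strictly decreasing sequence of ordinals, and a leaf of $T$, i.e.\ a position of rank $0$, is exactly one all of whose $\playertwo$-responses are dead; so the clock reaches $0$ precisely as the position becomes dead and $\playertwo$ loses. Therefore $\playertwo$ does not win $\EFD{\alpha+1}{\varepsilon,\Omega}{(\A,\bar a),(\B,\bar b)}$, which is the contrapositive of the desired implication.

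I expect the converse to be the crux. The two delicate points are the appeal to determinacy, where one must check that $\playertwo$'s payoff is genuinely closed even though the move set $\A\cup\B$ may be uncountable, and the ordinal bookkeeping in the reduction, where one must verify that playing $\mathrm{rk}(p)$ as the clock yields a strictly decreasing sequence of ordinals and that reaching a leaf of $T$ forces a dead position exactly when the clock expires, so that the translation respects the rules of the dynamic game. Note that this argument keeps the parameter $\varepsilon$ fixed throughout, which is why it is preferable to routing through the distances $r_{\alpha,n}$, where the boundary case $r_{\alpha,n}=\varepsilon$ would obscure the exact precision.
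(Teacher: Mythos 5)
Your proof is correct, but its hard direction takes a genuinely different route from the classical argument the paper points to (the paper's own proof is a one-line citation of V\"a\"an\"anen's \emph{Models and Games}). The classical proof is determinacy-free and direct: assuming $\playertwo\wins\EFD{\alpha}{\varepsilon,\Omega}{(\A,\bar a),(\B,\bar b)}$ for every ordinal $\alpha$, she plays the $\omega$-game maintaining the invariant that from the current position she wins the clocked game for \emph{every} clock. If some move $x$ of $\playerone$ admitted no response preserving this, then each response $y$ (ranging over a set) would fail at some least clock $\beta_y$, and with $\beta=\sup_y\beta_y$ the current position would already be lost for $\playertwo$ in the clock-$(\beta+1)$ game, contradicting the invariant; since a position from which $\playertwo$ wins even a clocked game cannot be dead, the invariant keeps every finite stage alive, which is exactly the closed winning condition. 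Your contrapositive route---Gale--Stewart determinacy of closed games (indeed valid for move sets of arbitrary cardinality: deadness depends on finitely many coordinates, so $\playertwo$'s payoff is an intersection of clopen sets and hence closed) plus the rank function on the well-founded tree of alive $\rho$-positions---is a legitimate alternative; it buys an explicit ordinal witness (if $\playertwo$ loses the $\omega$-game, then $\playerone$ wins the game with clock $\mathrm{rk}(T)+1$), at the price of invoking determinacy, whereas the classical argument constructs $\playertwo$'s strategy directly and exhibits the stabilization phenomenon that winning all clocks reduces to winning one sufficiently large clock. Your closing remark on keeping $\varepsilon$ fixed rather than routing through $r_{\alpha,n}$ is apt.

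One small repair is needed: your strategy for $\playerone$ is defined only at alive positions, and the claim that the clock reaches $0$ ``precisely as'' the position becomes dead is not accurate, since $\playertwo$ may move into a dead position while $\mathrm{rk}(p)>0$, where your prescription is silent. Because deadness persists under extensions of a position, the fix is one line: at a dead position $\playerone$ plays an arbitrary element together with clock $0$, ending the play at a dead final position, which he wins. With that clause added, the argument is complete.
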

\begin{proof}
    Similar to the classical proof, see e.g.~\cite{Vmodels}.
\end{proof}

\begin{corollary}
    $\playertwo\wins\EF{\Omega}{\omega}{\A,\B}$ if and only if $\A\equiv_\infty^\Omega\B$.
\end{corollary}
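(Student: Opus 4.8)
The plan is to obtain this purely by chaining the two immediately preceding results with the definitions of the ``precision-free'' games, so no new game-theoretic or ordinal analysis is needed. First I would unfold the definition of the stripped game: since a play of $\EF{\Omega}{\omega}{\A,\B}$ begins with $\playerone$ declaring some $\varepsilon>0$ and the players then proceeding as in $\EF{\varepsilon,\Omega}{\omega}{\A,\B}$, we have
\[
    \playertwo\wins\EF{\Omega}{\omega}{\A,\B} \iff \playertwo\wins\EF{\varepsilon,\Omega}{\omega}{\A,\B} \text{ for every } \varepsilon>0.
\]
The same reading of the dynamic variant gives, for each ordinal $\alpha$,
\[
    \playertwo\wins\EFD{\alpha}{\Omega}{\A,\B} \iff \playertwo\wins\EFD{\alpha}{\varepsilon,\Omega}{\A,\B} \text{ for every } \varepsilon>0.
\]

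Next I would feed in the preceding lemma, which says $\playertwo\wins\EF{\varepsilon,\Omega}{\omega}{\A,\B}$ if and only if $\playertwo\wins\EFD{\alpha}{\varepsilon,\Omega}{\A,\B}$ for all ordinals $\alpha$. Substituting this into the first equivalence yields that $\playertwo\wins\EF{\Omega}{\omega}{\A,\B}$ holds exactly when $\playertwo\wins\EFD{\alpha}{\varepsilon,\Omega}{\A,\B}$ for all $\varepsilon>0$ and all ordinals $\alpha$. These are two universal quantifiers over independent parameters, so I may freely interchange their order; rewriting the inner $\varepsilon$-quantifier back via the second equivalence above, this becomes: $\playertwo\wins\EFD{\alpha}{\Omega}{\A,\B}$ for all ordinals $\alpha$. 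Finally, the preceding corollary identifies $\playertwo\wins\EFD{\alpha}{\Omega}{\A,\B}$ with $\A\equiv_\alpha^\Omega\B$, and by definition $\A\equiv_\infty^\Omega\B$ means precisely that $\A\equiv_\alpha^\Omega\B$ holds for every $\alpha$. Concatenating the chain of equivalences then gives $\playertwo\wins\EF{\Omega}{\omega}{\A,\B} \iff \A\equiv_\infty^\Omega\B$, as desired.

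I do not expect a genuine obstacle: the statement is a formal consequence of the cited lemma and corollary once the stripped games are correctly expanded as conjunctions (over $\varepsilon>0$) of the fixed-precision games. The only steps warranting a moment's care are the interchange of the two independent universal quantifiers over $\varepsilon$ and $\alpha$, which is immediate, and the bookkeeping that matches $\EF{\Omega}{\omega}$ and $\EFD{\alpha}{\Omega}$ with their fixed-precision counterparts. The heavier content — the stabilization of $r_\alpha$ below $(\density(\A)+\density(\B))^+$ and the correspondence between dynamic games and the $r_{\alpha}$-distances — is already absorbed into Theorem~\ref{dynamic games vs r-distances}, the cited Fact, and the preceding corollary, so it need not be revisited here.
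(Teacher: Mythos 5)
Your proposal is correct and matches the paper's intended argument: the paper states this corollary without proof precisely because it follows by the chain you describe --- unfolding the stripped games as universal quantification over $\varepsilon$, applying the preceding lemma, swapping the independent quantifiers over $\varepsilon$ and $\alpha$, and invoking the preceding corollary together with the definition of $\equiv_\infty^\Omega$. No gaps; the only care needed is the quantifier bookkeeping, which you handle explicitly.
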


It is shown in~\cite{MR3689736} that for any countable signature $\sigma$, there is a modulus $\Omega_\sigma$ such that every $\sigma$-sentence of $\infinitary{\omega_1}$ is equivalent to an $\Omega_\sigma$-sentence. From this we obtain the following.

\begin{corollary}
    $\playertwo\wins\EFD{\alpha}{\Omega_\sigma}{\A,\B}$ for all $\alpha<\omega_1$ if and only if $\A$ and $\B$ satisfy the same sentences of $\infinitary{\omega_1}$.
\end{corollary}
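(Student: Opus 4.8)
The plan is to assemble this as a direct consequence of two facts already available to us: the immediately preceding corollary, which characterizes $\playertwo\wins\EFD{\alpha}{\Omega}{\A,\B}$ as $\A\equiv_\alpha^\Omega\B$ for an \emph{arbitrary} weak modulus $\Omega$, and the normal form result of~\cite{MR3689736}, namely that every $\infinitary{\omega_1}$-sentence is equivalent to an $\Omega_\sigma$-sentence. Instantiating the corollary at $\Omega = \Omega_\sigma$ turns the left-hand side of the biconditional, ``$\playertwo\wins\EFD{\alpha}{\Omega_\sigma}{\A,\B}$ for all $\alpha<\omega_1$'', into the equivalent assertion ``$\A\equiv_\alpha^{\Omega_\sigma}\B$ for all $\alpha<\omega_1$''. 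It then suffices to show that this latter condition holds if and only if $\A$ and $\B$ satisfy the same $\infinitary{\omega_1}$-sentences.

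The one genuine point of bookkeeping I would record first is that every $\infinitary{\omega_1}$-formula, and in particular every $\Omega_\sigma$-sentence, has countable quantifier rank. This follows by a straightforward induction on the formation of $\Omega$-formulae: basic formulae have quantifier rank $0$, a quantifier raises the rank by $1$, a $1$-Lipschitz connective leaves it unchanged, and an infinitary $\sup_{i<\alpha}$ or $\inf_{i<\alpha}$ with $\alpha<\omega_1$ takes the supremum of countably many countable ordinals, which is again countable. Consequently, the family $\{\A\equiv_\alpha^{\Omega_\sigma}\B : \alpha<\omega_1\}$ collectively asserts precisely that $\varphi^\A = \varphi^\B$ for \emph{every} $\Omega_\sigma$-sentence $\varphi$, since any such $\varphi$ has quantifier rank $\beta<\omega_1$ and is therefore already captured by the condition at level $\beta$.

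It remains to identify agreement on all $\Omega_\sigma$-sentences with agreement on all $\infinitary{\omega_1}$-sentences. The forward implication is immediate, as every $\Omega_\sigma$-sentence is by definition an $\infinitary{\omega_1}$-sentence. For the converse, I would invoke the normal form result: given an arbitrary $\infinitary{\omega_1}$-sentence $\psi$, choose an equivalent $\Omega_\sigma$-sentence $\psi'$, so that $\psi^\A = (\psi')^\A = (\psi')^\B = \psi^\B$ whenever $\A$ and $\B$ agree on $\Omega_\sigma$-sentences. Chaining the three equivalences then yields the corollary.

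The main obstacle is entirely external to this argument: it resides in the normal form theorem of~\cite{MR3689736}, which is exactly the ingredient we import and which relies on descriptive-set-theoretic machinery beyond the scope of this paper. Within the proof itself the only step requiring care is the quantifier-rank observation above, which is what licenses passing from the game condition ``for all $\alpha<\omega_1$'' to a statement quantifying over the full class of $\Omega_\sigma$-sentences; everything else is a routine assembly of the stated results.
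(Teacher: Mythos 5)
Your proof is correct and takes essentially the same route as the paper, which presents this corollary as an immediate consequence of the preceding corollary (instantiated at $\Omega = \Omega_\sigma$) together with the normal form theorem of~\cite{MR3689736}, exactly the two ingredients you assemble. Your explicit check that every $\infinitary{\omega_1}$-formula has countable quantifier rank (by regularity of $\omega_1$) is bookkeeping the paper leaves implicit, but it is precisely what justifies passing from ``for all $\alpha<\omega_1$'' to agreement on all $\Omega_\sigma$-sentences.
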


It remains as an open question whether $\playertwo\wins\EF{\Omega_\sigma}{\omega}{\A,\B}$ if and only if $\A$ and $\B$ satisfy the same sentences of $\infinitary\infty$. This is tied to the open question about whether every formula of $\infinitary\infty$ is equivalent to an $\Omega_\sigma$-formula. For uncountable signatures $\sigma$, the existence of a universal weak modulus $\Omega_\sigma$ seems unlikely.

\bibliographystyle{alpha}
\bibliography{arxiv_version2}

\end{document}